\definecolor{myred}{rgb}{0.75,0,0}
\definecolor{mygreen}{rgb}{0,0.5,0}
\definecolor{myblue}{rgb}{0,0,0.65}
\newcommand\T{\rule{0pt}{3.1ex}}
\theoremstyle{plain}
\newtheorem{theorem}{Theorem}[section]
\newtheorem{proposition}[theorem]{Proposition}
\newtheorem{lemma}[theorem]{Lemma}
\newtheorem{corollary}[theorem]{Corollary}
\newtheorem{conjecture}[theorem]{Conjecture}
\theoremstyle{definition}
\theoremstyle{remark}
\newtheorem{remark}[theorem]{Remark}
\def\ov{\overline}
    \def\CM{{\mathbb{C}}}
    \def\EM{{\mathbb{E}}}
    \def\FM{{\mathbb{F}}}
  \def\gG{{\mathfrak g}}
    \def\KM{{\mathbb{K}}}
    \def\NM{{\mathbb{N}}}
    \def\OM{{\mathbb{O}}}
    \def\QM{{\mathbb{Q}}}
    \def\ZM{{\mathbb{Z}}}
    \def\GC{{\mathcal{G}}}
    \def\HC{{\mathcal{H}}}
    \def\JC{{\mathcal{J}}}
    \def\KC{{\mathcal{K}}}
    \def\OC{{\mathcal{O}}}
    \def\UC{{\mathcal{U}}}
\def\a{\alpha}
\def\b{\beta}
\def\D{\Delta}
\def\e{\varepsilon}
\def\l{\lambda}
\DeclareMathOperator{\Perv}{{\mathrm{Perv}}}
\DeclareMathOperator{\supp}{{\mathrm{supp}}}
\newcommand{\elem}[1]{\stackrel{#1}{\longto}}
\def\to{\rightarrow}
\def\longto{\longrightarrow}
\def\rtordu{\rightsquigarrow}
\def\lexp#1#2{\kern\scriptspace\vphantom{#2}^{#1}\kern-\scriptspace#2}
\mathchardef\inferieur="321E
\mathchardef\superieur="321F
\def\eqna{\begin{eqnarray*}}
\def\endeqna{\end{eqnarray*}}
\def\ie{{\textit{i}.\textit{e}.}}
\def\p{{}^p\!}
\def\pp{{}^{p_+}\!}
\def\0{{}^0\!}
\def\Sing{{\mathrm{Sing}}}
\def\Spin{{\mathrm{Spin}}}
\def\courte{{\mathrm{sh}}}
\author{Daniel Juteau}
\title[Modular representations and affine Grassmannians]
{Modular representations of reductive groups and geometry of
  affine Grassmannians}
\def\gr{{\GC r}}
\def\eq{\Xi}
\begin{document}


\maketitle

\begin{abstract}
By the geometric Satake isomorphism of Mirkovi\'c and Vilonen,
decomposition numbers for reductive groups can be interpreted as
decomposition numbers for equivariant perverse sheaves on the complex affine
Grassmannian of the Langlands dual group. Using a description of the
minimal degenerations of the affine Grassmannian obtained by Malkin,
Ostrik and Vybornov, we are able to recover geometrically some
decomposition numbers for reductive groups. In the other direction, we
can use some decomposition numbers for reductive groups to prove
geometric results, such as a new proof of non-smoothness results,
and a proof that some singularities are not equivalent (a conjecture
of Malkin, Ostrik and Vybornov). We also give counterexamples to a
conjecture of Mirkovi\'c and Vilonen stating that the stalks of standard
perverse sheaves over the integers on the affine Grassmannian are
torsion-free, and propose a modified conjecture, excluding bad primes.
\end{abstract}

\section*{Introduction}
\label{sec:intro}

In \cite{decperv}, we introduced decomposition numbers for perverse
sheaves and calculated them for simple and minimal singularities. This
has applications in the modular representation theory of Weyl groups,
using a modular Springer correspondence \cite{these}.

In this article, we will use another bridge between representation
theory and geometry, provided by the geometric Satake isomorphism of
Mirkovi\'c and Vilonen \cite{MV}. It is an equivalence of tensor categories
between the category of representations of a reductive algebraic group $G$
over an arbitrary commutative ring $\EM$, and a category of
equivariant perverse sheaves on the complex affine Grassmannian of the
Langlands dual group, with $\EM$ coefficients.

In the first part of this article, we give applications from
geometry to the representation theory of reductive algebraic group
schemes, using results of Malkin, Ostrik and Vybornov describing the
minimal degenerations in the affine Grassmannian \cite{MOV}. In
Section \ref{sec:satake}, we review the results of \cite{MV}. We
deduce that decomposition numbers for reductive groups (that is, the
multiplicities of simple modules in Weyl modules) are decomposition
numbers for perverse sheaves. In Section \ref{sec:min deg}, we review
the results of \cite{MOV}.  It turns out that most minimal
degenerations in the affine Grassmannian are either simple or minimal
singularities, which we have studied in \cite{decperv}. So we can
recover decomposition numbers for reductive groups in this way. First,
we remark in Section \ref{sec:levi} that a Levi lemma known in
representation theory \cite[\S 5.21 (2) p. 230]{JAN} can be
interpreted geometrically by a Levi lemma used by Malkin, Ostrik and
Vybornov. Then, in Section \ref{sec:simple}, for the case of a simple
singularity we recover the fact that for two dominant weights which
are adjacent in the order of all dominant weights, the decomposition
number is one if there is a wall between them, and zero otherwise
\cite[Cor. 6.24 p.249]{JAN}. In Section \ref{sec:min}, for the case of a minimal
singularity, we recover the multiplicity of the trivial module in the
Weyl module of highest weight the dominant short root, which was
computed in \cite[Theorem 1.1]{CPScohI}.
Finally, in Section \ref{sec:torsion}, we remark that the calculations we
did in \cite{cohmin} for minimal singularities, together with the
results in \cite{decperv}, provide counterexamples to a conjecture in
\cite{MV} stating that the standard $\ZM_\ell$-perverse sheaves on the
affine Grassmannian should have torsion-free stalks. We propose to
modify the conjecture by requiring that $\ell$ be good.

In the second part of this article, we go in the other
direction and use some decomposition numbers for reductive
groups to prove geometric results.
In Section \ref{sec:spin}, we compute the decomposition number for
$G = \Spin_{2n + 1}$ corresponding for the weights
$\l = \varpi_1 + \varpi_n$ and $\mu = \varpi_n$ (in the numbering of
\cite{BOUR456}), corresponding to the quasi-minimal singularity of type $ac_n$.
In Section \ref{sec:non-equiv}, we prove non-equivalences of
singularities conjectured in \cite{MOV}. The argument is as follows:
if the singularities were equivalent, then the stalks of the
intersection cohomology complexes would be the same, both in
characteristic zero and in characteristic $\ell$, so the decomposition
numbers for perverse sheaves would be the same, and thus also the
decomposition numbers for reductive groups. We then see that there is
always a prime number $\ell$ for which these decomposition numbers are
different.
Finally, in Section \ref{sec:non-smooth}, we give a
representation-theoretic proof of the fact that the smooth locus of an
orbit on the affine Grassmannian is reduced to that orbit (this result
was already proved, in different ways, in \cite{EM} and \cite{MOV}).
For each minimal degeneration, we give a prime number $\ell$ for which
the corresponding decomposition number is non-trivial, which implies
that it is not $\FM_\ell$-smooth, and thus not smooth.

The methods and results in this work show again that it is useful to
consider perverse sheaves with coefficients of arbitrary
characteristic. Considering the intersection cohomology complexes
modulo all possible primes $\ell$, we get a finer invariant than just
the characteristic zero case. In this way, one can prove non-trivial
geometric results such as non-smoothness or non-equivalences, which
cannot be seen in the rational intersection cohomology stalks.
In the other direction, a deep understanding of the geometry of the
affine Grassmannians, including the determination of the stalks of the
intersection cohomology complexes in positive characteristic, would give
the modular characters of the reductive groups.

\subsection*{Acknowledgements}

The results in this article were obtained at the Mathematical Sciences
Research Institute in Berkeley, during the programs ``Representation
theory of finite groups and related topics'' and ``Combinatorial
representation theory''. The author is very grateful to the organizers
and to several people who taught him a lot about representations of
reductive groups, particularly Pierre Baumann, Cédric Lecouvey,
Zungzhou Lin and Leonard Scott. The lectures by Jonathan Brundan were
also very helpful. 

\setcounter{tocdepth}{1}
\tableofcontents

\part{From geometry to representation theory}
\label{part:geom to rep}

\section{Geometric Satake isomorphism}
\label{sec:satake}

By the geometric Satake isomorphism of Mirkovi\'c and Vilonen
\cite{MV}, the category of representations of a connected reductive
algebraic group scheme over any commutative ring $\EM$ is naturally
equivalent to a certain category of equivariant perverse sheaves on
the complex affine Grassmannian of the Langlands dual group $G^\vee$,
with $\EM$ coefficients.

Let $G$ be a split connected reductive algebraic group scheme defined over $\ZM$.
We fix a split maximal torus $T$ in $G$.
Then we have the character lattice $X(T)$ and
the cocharacter lattice $Y(T)$, with the canonical perfect pairing
$\langle-,-\rangle : X(T) \times Y(T) \to \ZM$, and the root systems $\Phi$ in $X(T)$
and $\Phi^\vee$ in $Y(T)$.
We also fix a Borel subgroup $B$ in $G$ containing $T$, corresponding
to some choice of subset of positive roots $\Phi^+$ in $\Phi$, defined by
a basis $\D$. We denote by $X(T)^+$ the set of dominant weights.

Then the irreducible $G_\CM$-modules are the Weyl modules $V_\CM(\l)$,
for $\l \in X(T)^+$. Let $\ell$ be a prime number.  There is a
procedure of reduction modulo $\ell$.  The choice of a highest weight
vector $v_+$ in $V_\CM(\l)$ (which is unique up to a non-zero scalar)
determines an integral Weyl module $V_\ZM(\l)$ (it is a module for
$\UC_\ZM$, Kostant's $\ZM$-form of the enveloping algebra $\UC$ of the
Lie algebra $\gG$ of $G$).  Let $\FM = \ov\FM_\ell$. Then the Weyl
module $V(\l)$ for $G_\FM$ can be defined as $\FM \otimes_\ZM
V_\ZM(\l)$. It turns out that this is the universal highest weight
module of highest weight $\l$ for $G_\FM$.  It has a unique simple
quotient $L(\l)$. The $L(\l)$, for $\l \in X(T)^+$, form a full set of
representatives of isomorphism classes of simple modules for
$G_\FM$. We also have the induced modules $H^0(\l)$, for $\l \in
X(T)^+$. There is a natural morphism $V(\l) \to H^0(\l)$, and $L(\l)$
can also be seen as the image of this morphism.  We denote by
$G_\FM$-mod the category of rational representations of $G_\FM$. 

To use a uniform notation, if $\EM$ is any commutative ring, we will
denote by $V_\EM(\l)$ and $H^0_\EM(\l)$ the Weyl
and induced modules for $G_\EM$, defined similarly, and if $\EM$ is a
field, we will write $L_\EM(\l)$ for the simple modules. We want to let
$\EM$ vary in an $\ell$-modular system $(\KM,\OM,\FM)$, both for the
representation theory and for the coefficients of the perverse
sheaves. That is, we can take for $\OM$ a discrete valuation ring with
quotient field $\KM$ and of characteristic zero and residue field
$\FM$ of characteristic $\ell$, for example finite extensions of
$\QM_\ell$, $\ZM_\ell$ and $\FM_\ell$, as in \cite{decperv}. But here, since
we work we the complex topology, we can take arbitrary commutative
rings as coefficients, such as $\CM$, $\ZM$ and $\ov\FM_\ell$.

The character of the Weyl module $V(\l)$ is the same as its
counterpart over $\CM$. It is given by Weyl's character formula.
The characters of the simple modules $L(\mu)$ are not known in
general. Their determination is equivalent to the determination of the
multiplicities $d^G_{\l\mu} := [V(\l):L(\mu)]$. Obviously, we have
$d^G_{\l\l} = 1$, and $d^G_{\l\mu}\neq 0$ implies $\mu \leq \l$
(for the usual order on $X(T)$, that is, $\l - \mu$ has to be a
non-negative linear combination of positive roots).

Now let $G^\vee_\CM$ denote the Langlands dual group, defined over the
complex numbers. Let $\KC = \CM((t))$ and $\OC = \CM[[t]]$.  We denote
by $\gr$ the affine Grassmannian $G^\vee(\KC)/G^\vee(\OC)$.  It is an
ind-variety (a direct limit of varieties).  If $\EM$ is any commutative
ring, let $\Perv_{G^\vee_\OC}(\gr,\EM)$ denote the category of
$G^\vee_\OC$-equivariant perverse sheaves on $\gr$ with $\EM$
coefficients. The $G^\vee_\OC$-orbits on $\gr$ are parametrized by
$X(T)^+$. We denote the orbit corresponding to $\l$ by $\gr_\l$. It is
of dimension $d_\l := 2 \langle \l, \rho^\vee \rangle$, where
$\rho^\vee$ is the sum of the fundamental coweights.  For $\l$ in
$X(T)^+$, we set $\p\JC_!(\l,\EM) = \p j_{\l !}
(\ov\gr_\l,\EM[d_\l])$, where $j_\l$ is the inclusion of $\gr_\l$ in
$\gr$ and $p$ is the middle perversity, and similarly for
$\p\JC_{!*}(\l,\EM)$ and $\p\JC_*(\l,\EM)$.  If $\EM = \OM$, we can
consider these three extensions relative to the dual perversity $p_+$
instead of $p$ \cite{BBD,decperv}.

Mirkovic and Vilonen \cite{MV} related the representation theory
of $G_\EM$ with $\EM$-perverse sheaves on the affine Grassmannian of
$G^\vee$. More precisely, their main result is as follows:

\begin{theorem}[Mirkovic-Vilonen]
We have an equivalence of tensor categories
\[
\eq : G_\EM\text{-mod} \elem{\sim} \Perv_{G^\vee_\OC}(\gr,\EM)
\]
which sends the natural morphism $V_\EM(\l) \to H^0_\EM(\l)$ to the natural
morphism $\JC_!(\l,\EM) \to \JC_*(\l,\EM)$. Thus, if $\EM$ is a field,
$\eq$ sends the simple $L_\EM(\l)$ to the simple $\JC_{!*}(\l,\EM)$.

Moreover, this equivalence is compatible with the extension of scalars
$\KM \otimes_\OM -$ and the modular reduction $\FM \otimes_\OM -$ (if
we take a torsion-free object in either category for $\EM = \OM$, its
modular reduction is an object of the corresponding category for
$\EM = \FM$).
\end{theorem}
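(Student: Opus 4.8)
The plan is to follow the proof of Mirkovi\'c and Vilonen: first realise $\Perv_{G^\vee_\OC}(\gr,\EM)$ as the category of representations of a flat affine group scheme over $\EM$ by a Tannakian reconstruction, then identify that group scheme with $G_\EM$, and finally match the distinguished objects and check compatibility with change of coefficients. As a preliminary I would record that the forgetful functor identifies $\Perv_{G^\vee_\OC}(\gr,\EM)$ with the full subcategory of $\Perv(\gr,\EM)$ of objects constructible along the orbit stratification, which is harmless since each $\gr_\l$ is a vector bundle over a partial flag variety of $G^\vee$, hence simply connected. Then I would equip the category with the convolution product $\AC\star\BC = m_*(\AC\,\widetilde{\boxtimes}\,\BC)$, where $m\colon\gr\,\widetilde{\times}\,\gr\to\gr$ is the multiplication on $\gr\,\widetilde{\times}\,\gr=G^\vee(\KC)\times_{G^\vee(\OC)}\gr$ and $\widetilde{\boxtimes}$ is the associated twisted external product. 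The first real point is that $\star$ preserves perversity; this follows because the iterated convolution morphism $m$ is semismall, which in turn reduces to the dimension bound $\dim\big(m^{-1}(x)\cap(\ov{\gr_\l}\,\widetilde{\times}\,\ov{\gr_\mu})\big)\le\langle\l+\mu-\nu,\rho^\vee\rangle$ for $x\in\gr_\nu$, whose sharpness is witnessed by the Mirkovi\'c--Vilonen cycles.

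Next I would introduce the fibre functor $F=\Hb^\bullet(\gr,-)=\bigoplus_n\Hb^n(\gr,-)$, with values in finitely generated $\EM$-modules, and establish the two properties a fibre functor must have: exactness and monoidality. Exactness is the heart of the matter. Using the semi-infinite orbits $S_\mu=N^\vee(\KC)\cdot t^\mu$, with $\mu$ running over the cocharacter lattice of $T^\vee$ (which is $X(T)$), one produces on $\Hb^\bullet(\gr,\AC)$ a canonical filtration whose associated graded is $\bigoplus_\mu\Hb^\bullet_c(S_\mu,\AC|_{S_\mu})$, and shows that for $\AC$ perverse the $\mu$-summand $F_\mu(\AC):=\Hb^\bullet_c(S_\mu,\AC|_{S_\mu})$ is concentrated in the single cohomological degree $\langle\mu,2\rho^\vee\rangle$; this concentration is again a statement about the dimensions of the pieces $S_\mu\cap\ov{\gr_\l}$, hence insensitive to $\EM$, and it forces each weight functor $F_\mu$, and therefore $F$, to be exact. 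One also checks that the $\Hb^n(\gr,\AC)$ are free $\EM$-modules, with $F_\mu$ of rank equal to the $\mu$-weight multiplicity on standard and costandard objects, which is what makes the formalism robust under base change. Monoidality $F(\AC\star\BC)\cong F(\AC)\otimes_\EM F(\BC)$ comes from a K\"unneth computation along the convolution diagram, and the correct symmetric structure is obtained not from the naive commutativity of $\star$ but from the fusion product on the Beilinson--Drinfeld Grassmannian. With $F$ exact, faithful, $\EM$-linear and symmetric monoidal on the rigid tensor category $\Perv_{G^\vee_\OC}(\gr,\EM)$, Tannakian reconstruction gives an equivalence $\Perv_{G^\vee_\OC}(\gr,\EM)\isom\mathrm{Rep}_\EM(\tilde G)$ for $\tilde G=\underline{\mathrm{Aut}}^\otimes(F)$ a flat affine group scheme over $\EM$, with $F$ corresponding to the forgetful functor.

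Then I would identify $\tilde G$ with $G$. Since the construction commutes with base change $\EM'\otimes_\EM-$ (hypercohomology of the ind-projective space $\gr$ commutes with base change), it suffices to treat $\EM=\ZM$, whence the general case. Over $\ZM$, or after base change to $\CM$, one checks that $\tilde G$ is split reductive and computes its root datum: the decomposition $F=\bigoplus_\mu F_\mu$ exhibits a split maximal torus of $\tilde G$ whose character lattice is the cocharacter lattice of $T^\vee$, namely $X(T)$; and a rank-one computation --- restricting to the Levi subgroups of $G^\vee$ of semisimple rank one, where the affine Grassmannian is that of $\mathrm{SL}_2$ or $\mathrm{PGL}_2$ and everything is explicit --- identifies the coroots of $\tilde G$ with $\Phi$ and the roots with $\Phi^\vee$. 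Thus $\tilde G$ has the root datum dual to that of $G^\vee$, which is the root datum of $G$; by Chevalley's classification of split reductive group schemes over $\ZM$ we get $\tilde G\cong G$, and $\eq$ is the resulting equivalence $G_\EM\text{-mod}\isom\Perv_{G^\vee_\OC}(\gr,\EM)$.

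Finally I would match the objects and check the base-change statement. For $\EM$ a field the simple objects on the geometric side are exactly the $\JC_{!*}(\l,\EM)$ (the strata being simply connected), indexed by $X(T)^+$ through the open orbit $\gr_\l$ in their support, and the highest weight of the $\tilde G$-module $F(\JC_{!*}(\l,\EM))$ is $\l$, so $\eq$ sends $L_\EM(\l)$ to $\JC_{!*}(\l,\EM)$. The standard object $\JC_!(\l,\EM)$ has $F_\mu(\JC_!(\l,\EM))$ of rank the dimension of the $\mu$-weight space of $V_\EM(\l)$ (the Mirkovi\'c--Vilonen cycles reproducing Weyl's character formula), so $\eq$ sends $V_\EM(\l)$ to $\JC_!(\l,\EM)$ and, by duality, $H^0_\EM(\l)$ to $\JC_*(\l,\EM)$, the unique-up-to-scalar morphism $V_\EM(\l)\to H^0_\EM(\l)$ going to the unique-up-to-scalar morphism $\JC_!(\l,\EM)\to\JC_*(\l,\EM)$. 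Compatibility with $\KM\otimes_\OM-$ and $\FM\otimes_\OM-$ is then automatic, because $\Hb^\bullet(\gr,-)$ commutes with $\KM\otimes_\OM-$ always and with the derived modular reduction by proper base change, and on $\OM$-torsion-free objects with torsion-free hypercohomology these are honest tensor products --- matching the fact that $V_\ZM(\l)$ is $\ZM$-free with $\FM\otimes_\ZM V_\ZM(\l)=V_\FM(\l)$, and likewise for $H^0$. The step I expect to be the genuine obstacle is the exactness of the weight functors $F_\mu$ over an arbitrary coefficient ring, together with the semismallness input making $\star$ monoidal and the fusion construction making it symmetric; these geometric facts are what the whole Tannakian machine rests on, whereas the subsequent identification of $\tilde G$ with the Chevalley scheme $G$ is ``only'' a reduction-to-rank-one bookkeeping of root data.
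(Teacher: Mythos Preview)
The paper does not prove this theorem at all: it is quoted as the main result of Mirkovi\'c and Vilonen \cite{MV} and used as a black box, so there is no ``paper's own proof'' to compare against. Your proposal is a faithful and accurate outline of the original Mirkovi\'c--Vilonen argument (semismallness of convolution, weight functors via semi-infinite orbits giving exactness of the fibre functor, fusion for the commutativity constraint, Tannakian reconstruction, and identification of the root datum), so it is correct, but it goes well beyond what the present paper does, which is simply to cite the result.
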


They also prove that $\p \JC_!(\l,\OM) \simeq \p \JC_{!*}(\l,\OM)$.
This implies
\[
\p \JC_!(\l,\OM) \simeq \pp\JC_!(\l,\OM) \simeq \p \JC_{!*}(\l,\OM)
\]
and 
\[
\pp \JC_{!*}(\l,\OM) \simeq \p\JC_*(\l,\OM) \simeq \pp
\JC_*(\l,\OM)
\]
and also
\[
\FM \p \JC_!(\l,\OM) \simeq \p \JC_!(\l,\FM)
\simeq \FM \pp \JC_!(\l,\OM) \simeq \FM \p \JC_{!*}(\l,\OM)
\]
and
\[
\FM \pp \JC_{!*}(\l,\OM) 
\simeq \FM \p \JC_*(\l,\OM) 
\simeq \p \JC_*(\l,\FM)
\simeq \FM \pp \JC_*(\l,\OM)
\]
Here, we denote simply by $\FM(-)$ the functor of modular reduction.
See \cite{decperv} for general results of this kind.

We can consider the decomposition numbers for $G_\OC^\vee$-equivariant
perverse sheaves on $\gr$, defined by
\[
d^\gr_{\l\mu} := [\FM \p\JC_{!*}(\l,\OM) : \p\JC_{!*}(\mu,\FM)]
= [\p\JC_!(\l,\FM) : \p\JC_{!*}(\mu,\FM)]
\]
Let us give an immediate consequence of the result of Mirkovi\'{c} and
Vilonen in terms of decomposition numbers:

\begin{corollary}
For $\l$, $\mu$ in $X(T)^+$, we have
\[
d^G_{\l\mu} = d^\gr_{\l\mu}
\]
\end{corollary}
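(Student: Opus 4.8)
The plan is to deduce the equality of decomposition numbers directly from the Mirkovi\'c--Vilonen equivalence $\eq$ together with its stated compatibility with modular reduction. First I would observe that $\eq$, being an equivalence of abelian categories, induces a bijection on isomorphism classes of simple objects and preserves all composition-series multiplicities. By the theorem, over a field $\eq$ sends $L_\EM(\l)$ to $\p\JC_{!*}(\l,\EM)$; applied with $\EM = \FM$, this gives $[\,M : L_\FM(\mu)\,] = [\,\eq(M) : \p\JC_{!*}(\mu,\FM)\,]$ for any object $M$ of $G_\FM$-mod.

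Next I would bring in the integral picture. Fix the $\ell$-modular system $(\KM,\OM,\FM)$. The Weyl module $V_\OM(\l)$ is a torsion-free object for $\EM = \OM$ (it is free of finite rank over $\OM$), so by the last clause of the theorem its modular reduction $\FM \otimes_\OM V_\OM(\l) = V_\FM(\l) = V(\l)$ corresponds under $\eq$ to the modular reduction of $\eq(V_\OM(\l))$. Now $\eq$ sends the natural morphism $V_\OM(\l) \to H^0_\OM(\l)$ to $\JC_!(\l,\OM) \to \JC_*(\l,\OM)$; since $\eq$ is an equivalence it identifies the sources, so $\eq(V_\OM(\l)) \simeq \p\JC_!(\l,\OM)$, which by the Mirkovi\'c--Vilonen torsion-freeness result $\p\JC_!(\l,\OM) \simeq \p\JC_{!*}(\l,\OM)$ is torsion-free, with modular reduction $\FM\,\p\JC_!(\l,\OM) \simeq \p\JC_!(\l,\FM)$ as displayed in the excerpt. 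Hence $\eq$ carries $V(\l) = \FM \otimes_\OM V_\OM(\l)$ to $\p\JC_!(\l,\FM) \simeq \FM\,\p\JC_{!*}(\l,\OM)$.

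Combining these two steps:
\[
d^G_{\l\mu} = [V(\l) : L(\mu)] = [V(\l) : L_\FM(\mu)]
= [\eq(V(\l)) : \p\JC_{!*}(\mu,\FM)]
= [\p\JC_!(\l,\FM) : \p\JC_{!*}(\mu,\FM)] = d^\gr_{\l\mu},
\]
where the last equality is the very definition of $d^\gr_{\l\mu}$ recalled just before the corollary (the two expressions for $d^\gr_{\l\mu}$ agree by the displayed isomorphism $\FM\,\p\JC_{!*}(\l,\OM) \simeq \p\JC_!(\l,\FM)$).

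I do not expect any serious obstacle: the statement is essentially a bookkeeping corollary, and all the genuine content—the equivalence, the matching of the $V \to H^0$ morphism with $\JC_! \to \JC_*$, the compatibility with $\FM \otimes_\OM -$, and the torsion-freeness $\p\JC_!(\l,\OM) \simeq \p\JC_{!*}(\l,\OM)$—is supplied by the theorem and the remarks following it. The only point requiring a little care is to make sure one is comparing the \emph{same} simple objects on both sides, i.e.\ that under $\eq$ the label $\l$ used for $L_\FM(\l)$ (highest weight) matches the label $\l$ used for $\p\JC_{!*}(\l,\FM)$ ($G^\vee_\OC$-orbit closure); but this is exactly the normalization built into the statement of the Mirkovi\'c--Vilonen theorem as quoted, so nothing further is needed.
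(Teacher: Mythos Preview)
Your proposal is correct and follows exactly the reasoning the paper has in mind: the paper treats the corollary as an ``immediate consequence'' of the Mirkovi\'c--Vilonen theorem and does not spell out a proof, and what you have written is precisely the unpacking of that immediate consequence (equivalence preserves composition multiplicities, sends $L_\FM(\mu)$ to $\p\JC_{!*}(\mu,\FM)$, and is compatible with modular reduction so that $V(\l)$ goes to $\p\JC_!(\l,\FM)\simeq \FM\,\p\JC_{!*}(\l,\OM)$). There is nothing to add.
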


Therefore, this decomposition number will just be denoted by $d_{\l\mu}$.

\section{Minimal degenerations}
\label{sec:min deg}

In \cite{decperv}, we introduced decomposition numbers for perverse
sheaves, described some of their properties, and computed them for
simple surface singularities and for minimal singularities.
This had applications for the modular representation theory of Weyl
groups, using a modular Springer correspondence making a link with
modular perverse sheaves on the nilpotent cone: we showed in
\cite{these} that decomposition numbers for Weyl groups are
particular cases of decomposition numbers for perverse sheaves on the
nilpotent cone.

It turns out that simple and minimal singularities also occur in the
affine Grassmannian, and in fact most minimal degenerations in $\gr$
are of either kind, by the work of Malkin, Ostrik and
Vybornov \cite{MOV}. In non-simply-laced types, a few others show up,
which they call quasi-minimal.

A minimal degeneration in a stratified space is a pair of strata which
are adjacent in the order given by the inclusion of closures. In the
affine Grassmannian $\gr$, these are parametrized by pairs of adjacent
dominant weights $(\l,\mu)$ (in the usual order), \ie\ such that $\l >
\mu$ and there is no dominant weight $\nu$ with $\l > \nu > \mu$.
Such a pair is also called a minimal degeneration, and is denoted by
$\l \rtordu \mu$. They were classified by Stembridge \cite{STEM}.

For $\b$ in $Q$, we denote by $\supp(\b)$ the Dynkin subdiagram
consisting in the simple roots appearing in the decomposition of
$\b$ with a non-zero coefficient. For a minimal degeneration
$\l \rtordu \mu$, $\supp(\l - \mu)$ is clearly connected.
For $I\subset \D$ and  $\l = \sum_{\a\in\D} \l_\a \varpi_\a$ a weight, we set
$\l_I = \sum_{\a \in I} \l_\a \varpi_\a$.

\begin{theorem}[Stembridge]
\label{th:stembridge}
Let $\l > \mu$ be a pair of weights. We set $\b = \l - \mu$ and
$I = \supp(\b) \subset \D$.
Then we have $\l \rtordu \mu$ if and only if one of the following holds:
\begin{enumerate}[{Case} (1)]
\item
\label{case:simple}
$\b$ is a simple root.

\item
\label{case:short}
$\b$ is the short dominant root of $\Phi_I$ and
$\langle \mu, \a^\vee \rangle = 0$ for all $\a$ in $I$.

\item
\label{case:ac}
$\b$ is the short dominant root of $\Phi_I$, $\Phi_I$ is of type
$B_n$, and $\mu_I = \varpi_\a$, where $\a$ is the unique short simple
root in $I$.

\item
\label{case:ag}
$\Phi_I$ is of type $G_2$, $\l_I = \varpi_{\a_1} + \varpi_{\a_2}$,
$\mu_I = 2 \varpi_{\a_1}$, where $I = \{\a_1,\a_2\}$ with $\a_1$ short and
$\a_2$ long.

\item
\label{case:cg}
$\Phi_I$ is of type $G_2$, $\l_I = \varpi_{\a_2}$, $\mu_I = \varpi_{\a_1}$,
where $I = \{\a_1,\a_2\}$ as in the previous case.
\end{enumerate}
\end{theorem}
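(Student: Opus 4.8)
The plan is to reduce first to the case where $\Phi$ is irreducible and $\b:=\l-\mu$ has full support $I=\D$, and then to carry out a finite, type-by-type verification. Write $Q^+$ for the monoid of non-negative integer combinations of simple roots and $Q_I^+$ for the submonoid generated by the simple roots in $I$. Since $\b$ is supported on $I$, any weight $\nu$ with $\mu<\nu<\l$ has the form $\nu=\mu+\eta$ with $\eta\in Q_I^+$ and $0<\eta<\b$; and such a $\nu$ is dominant if and only if $\langle\mu,\a^\vee\rangle+\langle\eta,\a^\vee\rangle\ge 0$ for all $\a\in I$, the inequalities for $\a\notin I$ being automatic because $\b-\eta\in Q_I^+$ and $\l$ is dominant. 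Hence whether $\l\rtordu\mu$ depends only on $\Phi_I$ and on the tuple $(\langle\mu,\a^\vee\rangle)_{\a\in I}$, i.e.\ on $\mu_I$; and since $I=\supp(\b)$ is connected, $\Phi_I$ is irreducible. So from now on we assume $\Phi=\Phi_I$ is irreducible, $\supp(\b)=\D$, and $\mu=\mu_I$.

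The reverse implication is then routine. In each of Cases (1)--(5) the element $\b$ is small: it is the short dominant root $\theta$ in Cases (1)--(3) (in Case (1), $\Phi$ has type $A_1$), and $\a_1+\a_2$ in Cases (4)--(5), so the set $\{\eta\in Q^+ : 0<\eta<\b\}$ is finite and explicit, and empty in Case (1). A dominant weight $\nu$ with $\mu<\nu<\l$ must be of the form $\mu+\eta$ for such an $\eta$, so it suffices to check, using the explicit Cartan matrix, that no $\mu+\eta$ is dominant; the hypotheses on $\mu$ (that $\langle\mu,\a^\vee\rangle=0$ on $I$ in Case (2), $\mu_I=\varpi_\a$ in Case (3), and the prescribed $\mu_I$ in Cases (4)--(5)) are precisely what make each $\mu+\eta$ fail to be dominant.

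For the forward implication, assume $\l\rtordu\mu$ with $\supp(\b)=\D$. If $|\D|=1$, write $\b=c\,\a_1$: either $c=1$, which is Case (1), or $c\ge 2$, in which case $0<\a_1<\b$ and $\mu+\a_1$ is dominant, an intermediate dominant weight contradicting $\l\rtordu\mu$. So suppose $|\D|\ge 2$. By the definition of a minimal degeneration, $\mu+\eta$ is non-dominant for every $\eta\in Q^+$ with $0<\eta<\b$; applying this with $\eta$ each simple root (all of which satisfy $0<\eta<\b$ by full support) already forces, for each $\g\in\D$, a neighbour $\d$ with $\langle\mu,\d^\vee\rangle<|\langle\g,\d^\vee\rangle|\le 3$. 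Now there are two cases. If $\b$ is dominant, then since the short dominant root $\theta$ is the unique minimal nonzero dominant element of $Q^+$, either $\b=\theta$ or $\mu+\theta$ is a dominant weight with $\mu<\mu+\theta<\l$, the latter contradicting $\l\rtordu\mu$; hence $\b=\theta$, and feeding the non-dominance of all the $\mu+\eta$ through the list of irreducible root systems forces $\langle\mu,\a^\vee\rangle=0$ for all $\a\in\D$ --- this is Case (2) --- except in type $B_n$, where one instead gets $\langle\mu,\a^\vee\rangle=0$ on the long simple roots and $\langle\mu,\a^\vee\rangle\in\{0,1\}$ on the short one, i.e.\ Case (2) or Case (3). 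If $\b$ is not dominant, say $\langle\b,\a^\vee\rangle\le -1$, then $\langle\mu,\a^\vee\rangle=\langle\l,\a^\vee\rangle-\langle\b,\a^\vee\rangle\ge 1$, so $\mu\ne 0$; one then shows, again type by type, that $\mu\ne 0$ together with the dominance of $\l=\mu+\b$ and the non-dominance of every $\mu+\eta$ can only occur when $\Phi$ has type $G_2$, and a short rank-two computation there pins down $\b=\a_1+\a_2$ and $\mu_I\in\{\varpi_{\a_1},\,2\varpi_{\a_1}\}$, which are Cases (5) and (4).

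The main obstacle is this type-by-type bookkeeping in the forward implication: for each irreducible $\Phi$ one must enumerate the few elements $\eta\in Q^+$ lying strictly below the short dominant root $\theta$ and extract, from the demand that every $\mu+\eta$ be non-dominant, the precise and very restrictive list of admissible $\mu$ --- this is where the exceptional $\mu_I=\varpi_\a$ in type $B_n$ shows up, through the constraint imposed by $\eta=\a_1+\dots+\a_{n-1}$ --- and one must separately treat the non-dominant-$\b$ phenomenon, which occurs only in type $G_2$. Everything else is either formal (the support reduction) or a mechanical finite check (the reverse implication, and the translation of the covering property into the non-dominance of all the $\mu+\eta$).
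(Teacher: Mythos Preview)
The paper does not give its own proof of this theorem; it is stated with attribution to Stembridge and cited from \cite{STEM}. Your outline is essentially Stembridge's strategy: reduce to an irreducible root system with $\supp(\beta)=\Delta$ via the Levi-type observation, then split on whether $\beta$ is dominant --- using that the highest short root $\theta$ is the minimum of the nonzero dominant elements of $Q^+$ to force $\beta=\theta$ in that branch --- and finish with a type-by-type determination of the admissible $\mu$. The sketch is sound; as you yourself note, the real content lies in the case analysis (pinning down $\mu_I$ when $\beta=\theta$, and showing that the non-dominant-$\beta$ situation survives only in type $G_2$), which you have described but not carried out.
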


Note that, in the two last cases, we have
$\b = \varpi_{\a_2} - \varpi_{\a_1} = \a_1 + \a_2$.
In this situation, if we assume $\Phi$ to be irreducible, then $\Phi = \Phi_I$.

Now we come to the descrpition of the minimal degenerations. For the
notion of smooth equivalence of singularities and the notation
$\Sing$, we refer to \cite{KP1}.

\begin{theorem}[Malkin-Ostrik-Vybornov]
\label{th:mov}
Let $\l \rtordu \mu$ be a minimal degeneration.
Let $\b = \l - \mu$ and $I = \supp(\b)$.
We set $\l = \sum_{\a\in\D} \l_\a \varpi_\a$ and
$\mu = \sum_{\a\in\D} \mu_\a \varpi_\a$.
\begin{enumerate}[{Case} (1)]
\item
If $\b$ is a simple root, then
$\Sing(\ov\gr_\l, \gr_\mu)$ is a Kleinian singularity of type
$A_{\l_\b - 1} = A_{\mu_\b + 1}$.

\item
If $\b$ is the short dominant root of $\Phi_I$ and $\mu_I = 0$, then
$\Sing(\ov\gr_\l, \gr_\mu)$ is a minimal singularity of the type of
$\Phi_I^\vee$, the root subsystem of $\Phi^\vee$ generated by the
$\a^\vee$, $\a\in I$.
\end{enumerate}
\end{theorem}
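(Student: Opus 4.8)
The plan is to treat both cases by the same two-step method: a Levi reduction bringing us to an irreducible root system with $\supp(\b) = \D$, then an essentially rank-one slice computation in Case~(1) and the identification of an explicit orbit closure in Case~(2). For the reduction, let $L^\vee = L^\vee_I \subset G^\vee$ be the standard Levi subgroup whose root system is $\Phi_I^\vee$. There is an $L^\vee(\OC)$-equivariant locally closed embedding of the affine Grassmannian of $L^\vee$ into $\gr$ along which the orbit $\gr_\l$ meets the image transversally in a single $L^\vee(\OC)$-orbit; this is the geometric Levi lemma used by Malkin--Ostrik--Vybornov, the counterpart of the representation-theoretic Levi lemma \cite[\S 5.21(2)]{JAN} recalled in Section~\ref{sec:levi}. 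Since $\supp(\b) = I$, the pair $(\l,\mu)$ remains a minimal degeneration of the same Stembridge type (Theorem~\ref{th:stembridge}) inside the affine Grassmannian of $L^\vee$, and transversality lets us compute $\Sing(\ov\gr_\l,\gr_\mu)$ there. So from now on I assume $\Phi$ irreducible and $I = \D$.

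\emph{Case~(1).} Now $\D = \{\b\}$ and $\Phi$ has type $A_1$, so $G^\vee$ has semisimple rank one. The dominant coweights form a chain, $\dim\gr_\nu = 2\langle\nu,\rho^\vee\rangle$ grows by $2$ at each step, each $\ov\gr_\nu$ is normal, and loop rotation gives a $\GM_m$-action contracting to the deepest stratum; hence the transverse slice is a normal surface with a good $\GM_m$-action. I would compute it directly in the lattice model of $\gr$ for $\mathrm{SL}_2$ (or $\mathrm{PGL}_2$): in suitable local coordinates near a point of $\gr_\mu$, the variety $\ov\gr_\l$ is cut out, transversally to $\gr_\mu$, by an equation of the form $xy = z^{k}$ with $k = \l_\b$, so the slice is the Kleinian singularity of type $A_{k-1}$. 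Since $\l_\b - \mu_\b = \langle\b,\b^\vee\rangle = 2$, this is $A_{\l_\b-1} = A_{\mu_\b+1}$. (Equivalently, invoke the Mirkovi\'c--Vybornov isomorphism identifying these slices with nilpotent Slodowy slices in type $A$ and quote the Kraft--Procesi description of their minimal degenerations, cf.\ \cite{KP1}.)

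\emph{Case~(2).} Here $\Phi$ is irreducible, $\b$ is the short dominant root, and $\langle\mu,\a^\vee\rangle = 0$ for all $\a\in\D$. Since the $\a^\vee$, $\a\in\D$, are the simple roots of $G^\vee$, this means the cocharacter $\mu$ of the dual torus is orthogonal to every root of $G^\vee$, i.e.\ takes values in $Z(G^\vee)^\circ$; hence $t^\mu$ is central in $G^\vee(\KC)$, and right translation by $t^\mu$ is an automorphism of $\gr$ carrying $\gr_\nu$ to $\gr_{\nu+\mu}$, in particular $(\ov\gr_\l,\gr_\mu)$ to $(\ov\gr_\b,\gr_0)$. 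So it suffices to treat $\mu = 0$, $\l = \b$. The weight $\b$ is quasi-minuscule for $G$, so $\ov\gr_\b = \gr_\b \sqcup \gr_0 = \gr_\b \sqcup \{\mathrm{pt}\}$, a variety with a single singular point. It remains to identify $\ov\gr_\b$ with the minimal singularity of type $\Phi^\vee$: by the geometric Satake realization of Section~\ref{sec:satake}, $\ov\gr_\b$ is the affine cone over the closed $G^\vee$-orbit attached to $\b$, and one checks that this is the closure $\overline{\OC_{\mathrm{min}}}(\gG^\vee)$ of the minimal nilpotent orbit of $\gG^\vee$ (transparent when $\Phi$ is simply-laced, the relevant representation being the adjoint one; in general one matches the cone against the definition of minimal singularity of type $\Phi^\vee$ in \cite{KP1}). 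This gives the assertion.

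I expect the main obstacle to be the Levi reduction: one must verify that the embedding of the affine Grassmannian of $L^\vee$ into $\gr$ is genuinely transverse to the $G^\vee(\OC)$-orbits, and that intersecting with it preserves the smooth-equivalence type of the singularity — this is where the real geometric content of \cite{MOV} lies, the remaining case analysis being comparatively formal. A secondary point is to pin down the Kleinian index in Case~(1) exactly (settled by the explicit $A_1$ lattice model, or by the Kraft--Procesi dictionary) and, in Case~(2), to match $\ov\gr_\b$ with the chosen normalization of ``minimal singularity of type $\Phi^\vee$'' in the non-simply-laced cases.
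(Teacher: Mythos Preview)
The paper does not prove this theorem at all: Theorem~\ref{th:mov} is stated with attribution to Malkin--Ostrik--Vybornov and simply quoted from \cite{MOV} as part of the review in Section~\ref{sec:min deg}. There is no ``paper's own proof'' to compare against; the result is used as input, not established.

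That said, your sketch is a fair outline of how the argument in \cite{MOV} actually runs: Levi reduction (their \S 3, restated here as the Lemma in Section~\ref{sec:levi}) followed by a direct analysis of the two basic cases. A couple of comments. In Case~(2), your central-translation step is unnecessary: the Levi lemma already replaces $(\l,\mu)$ by $(\l_I,\mu_I)$, and the hypothesis $\mu_I = 0$ puts you at the base point immediately. The substantive step you gloss over --- identifying $\ov\gr_\b$ for the quasi-minuscule coweight $\b$ with $\overline{\OC_{\min}}(\gG^\vee)$ --- is genuine geometry in \cite{MOV} (they construct an explicit isomorphism via the open cell and the nilpotent radical), not a formal consequence of Satake. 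In Case~(1), ``suitable local coordinates'' hides the actual content; the honest route is the one you mention parenthetically, through the identification of these slices with type~$A$ nilpotent Slodowy slices and the Kraft--Procesi classification. Your self-assessment of where the difficulties lie is accurate.
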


So most minimal degenerations are Kleinian or minimal singularities,
and we will be able to apply our study of decomposition numbers for perverse
sheaves on these singularities in \cite{decperv}. Malkin, Ostrik and
Vybornov called the remaining minimal degenerations
\emph{quasi-minimal singularities}. They are denoted by
$ac_n$ in case \eqref{case:ac},
$ag_2$ in case \eqref{case:ag},
and $cg_2$ in case \eqref{case:cg}.

The minimal degenerations can be studied using a transverse slice
$S(\l,\mu) = \ov\gr_\l \cap L^{< 0}G.\mu$, with the notation of
\cite{MOV}. We denote by $d(\l,\mu)$ the
dimension of $S(\l,\mu)$. We set:
\[
m_\mu(\l,q) = \sum_{i\geq 0} \dim \HC^{i - d(\l,\mu)}_\mu \ \p\JC_{!*}(\ov\gr_\l,\KM).q^i
\]

\begin{proposition}
The codimension $d = d(\l,\mu)$ and intersection cohomology invariants $m_\mu(\l,q)$
over $\KM$ of the minimal degenerations are given by:
\begin{enumerate}[{Case} (1)]
\item
$d = 2$ and $m_\mu(\l,q) = 1$;

\item
$d = 2h^\vee(G^\vee) - 2$, where $h^\vee(G^\vee)$ is the dual Coxeter
  number of $G^\vee$,
and $m_\mu(\l,q) = \sum_{i = 1}^{t} q^{e_i - 1}$, where $t$ is the
number of long simple roots in $\Phi_I$, and the $e_i$ are the
exponents of $W_I$;

\item
$d = 2n$ and $m_\mu(\l,q) = \sum_{i = 0}^{n - 1} q^i$, as for the
  minimal singularity $a_n$;

\item
$d = 4$ and $m_\mu(\l,q) = 1 + q$, as for the minimal singularity $a_2$;

\item
$d = 4$ and $m_\mu(\l,q) = 1$, as for the minimal singularity $c_2$.
\end{enumerate}
\end{proposition}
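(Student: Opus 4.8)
The two invariants are of quite different natures, so the plan is to compute $d(\l,\mu)$ and $m_\mu(\l,q)$ separately. The dimension is pure root theory: since $S(\l,\mu)$ is a transverse slice to $\gr_\mu$ inside $\ov\gr_\l$, it has dimension equal to the codimension of $\gr_\mu$, namely
\[
d(\l,\mu)=\dim\ov\gr_\l-\dim\gr_\mu=d_\l-d_\mu=2\langle\l-\mu,\rho^\vee\rangle=2\langle\b,\rho^\vee\rangle=2\,\hauteur(\b),
\]
using $\langle\a,\rho^\vee\rangle=1$ for each simple root, so $\langle\b,\rho^\vee\rangle$ is the sum of the coefficients of $\b$ on the simple roots. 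One then reads this off Theorem~\ref{th:stembridge}: in Case \eqref{case:simple}, $\b$ is simple so $\hauteur(\b)=1$ and $d=2$; in Cases \eqref{case:ag} and \eqref{case:cg}, $\b=\a_1+\a_2$, so $\hauteur(\b)=2$ and $d=4$; in Case \eqref{case:ac}, $\b$ is the highest short root of $B_n$, which is $\a_1+\dots+\a_n$, so $\hauteur(\b)=n$ and $d=2n$. In Case \eqref{case:short}, $\b$ is the highest short root $\theta_s$ of $\Phi_I$, and one invokes the identity $\hauteur(\theta_s(\Psi))=h^\vee(\Psi^\vee)-1$: the coroot $\theta_s(\Psi)^\vee$ is the highest root of $\Psi^\vee$, the dual Coxeter number of any root system is $1$ plus the height of the highest root measured against the simple \emph{co}roots, and for $\Psi^\vee$ that height evaluated at $(\theta_s(\Psi)^\vee)^\vee=\theta_s(\Psi)$ is just the ordinary height $\hauteur(\theta_s(\Psi))$ (the simple coroots of $\Psi^\vee$ being the simple roots of $\Psi$). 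Hence $d=2h^\vee(\Phi_I^\vee)-2$, which is the meaning of $2h^\vee(G^\vee)-2$.

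For $m_\mu(\l,q)$ the plan is to use the classification of the slices in Theorem~\ref{th:mov}. In Case \eqref{case:simple}, $S(\l,\mu)$ is a Kleinian surface singularity $\CM^2/\Gamma$; such a quotient is a rational homology manifold, so its intersection cohomology complex is a shift of the constant sheaf and its stalk at the singular point is $\KM$ in a single degree, whence $m_\mu(\l,q)=1$. In Case \eqref{case:short}, $S(\l,\mu)$ is the closure of the minimal nilpotent orbit in the simple Lie algebra $\hG$ of type $\Phi_I^\vee$, which is the affine cone over the unique closed $\hG$-orbit $Y=\hG/\hat P$ in $\PM(\hG)$ (the highest-weight orbit of the adjoint representation). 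By the standard formula for the intersection cohomology of a projective cone, $\mathcal H^{2k}(IC_S)_0$ is the cokernel of multiplication by the ample class $H^{2k-2}(Y)\to H^{2k}(Y)$ — and $H^\bullet(Y)$ is concentrated in even degrees — so one computes this from the Schubert-cell (Poincaré) polynomial of $Y=\hG/\hat P$. Equivalently, this is the classical computation of the generalized exponents of the zero weight space of the little adjoint representation $V_\KM(\theta_s(\Phi_I))$ (Kostant and Brylinski in the simply-laced case, where it reduces to the ordinary adjoint representation and gives $\sum q^{e_i-1}$ over all exponents; for the non-simply-laced little adjoint representation one uses the refinement due to Broer), and it yields $m_\mu(\l,q)=\sum_{i=1}^t q^{e_i-1}$ with the $e_i$ exponents of $W_I$ and $t$ as stated. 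A useful cross-check throughout is that, via the geometric Satake isomorphism of Section~\ref{sec:satake}, $m_\mu(\l,1)=\dim V_\KM(\l)_\mu$.

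In the three quasi-minimal Cases \eqref{case:ac}, \eqref{case:ag}, \eqref{case:cg} there is no reduction to a familiar singularity, so the plan is to appeal to the explicit transverse-slice models of Malkin, Ostrik and Vybornov in \cite{MOV} together with their intersection-cohomology computation, which give $m_\mu(\l,q)=\sum_{i=0}^{n-1}q^i$, $\;1+q$, and $1$ respectively; alternatively, in the $G_2$ cases \eqref{case:ag} and \eqref{case:cg} the representations $V_{G_2}(\varpi_1+\varpi_2)$ and $V_{G_2}(\varpi_2)$ are small enough that $m_\mu(\l,q)$ follows by hand from Freudenthal's formula, and in Case \eqref{case:ac} the invariant is $m^{\varpi_1+\varpi_n}_{\varpi_n}(q)$ for $\Spin_{2n+1}$, which is exactly the quantity computed in Section~\ref{sec:spin}. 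I expect the real work to be entirely on the intersection-cohomology side: the dimension count is routine, but Case \eqref{case:short} needs the non-simply-laced form of Kostant's theorem (the passage to the little adjoint representation and to $\Phi_I^\vee$ is precisely where the long/short distinction enters), and Cases \eqref{case:ac}--\eqref{case:cg} require either reproducing the slice models of \cite{MOV} or a direct $q$-analogue weight-multiplicity computation — the $G_2$ cases being immediate but the $ac_n$ family needing genuine bookkeeping in the spin representation. Accordingly I would lean on \cite{MOV} for the slice descriptions and on the classical theory of minimal orbits for Case \eqref{case:short}, keeping only the small $G_2$ verifications self-contained.
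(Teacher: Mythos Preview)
The paper does not actually prove this proposition: it is stated without proof as a compilation of facts drawn from \cite{MOV} (for the classification of the slices and the quasi-minimal cases) and from the author's own \cite{cohmin,decperv} (for the intersection cohomology of simple and minimal singularities). The only accompanying comment is the sentence immediately following the proposition, pointing to those references. Your proposal is therefore considerably more explicit than anything in the paper, and your overall strategy --- compute $d$ by root combinatorics, then read $m_\mu(\l,q)$ off the slice models of Theorem~\ref{th:mov} and the known IC stalks of Kleinian and minimal singularities, deferring the quasi-minimal cases to \cite{MOV} --- is exactly the intended one.

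Two small remarks. First, your appeal to Section~\ref{sec:spin} for the $ac_n$ case overstates what is proved there: that section computes only $\dim V(\l)_\mu = n$, i.e.\ $m_\mu(\l,1)$, not the $q$-polynomial itself; for the graded refinement one really does need the slice computation of \cite{MOV} (or a Lusztig $q$-analogue / Brylinski--Kostant argument). Second, your derivation of $d = 2h^\vee(\Phi_I^\vee) - 2$ in Case~\eqref{case:short} is correct but could be stated more cleanly: the key identity is $h^\vee(\Psi) - 1 = \langle \rho_\Psi, \theta(\Psi)^\vee\rangle = \hauteur_{\Psi^\vee}(\theta_s(\Psi^\vee))$, applied with $\Psi = \Phi_I^\vee$.
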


Note that, for simple and minimal singularities, we computed the
local intersection cohomology over the integers in \cite{decperv}. For
minimal singularities, this uses the computation of the integral
cohomology of the minimal orbit in \cite{cohmin}.

Malkin, Ostrik and Vybornov were able to prove the following result in
a way completely different from Evens and Mirkovi\'c \cite{EM}:

\begin{theorem}
\label{th:smooth locus}
The smooth locus of $\ov\gr_\l$ is just $\gr_\l$.
\end{theorem}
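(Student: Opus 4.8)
The plan is to show that no orbit $\gr_\mu$ with $\mu < \l$ can lie in the smooth locus of $\ov\gr_\l$. Since $\ov\gr_\l = \bigsqcup_{\mu \le \l} \gr_\mu$, it suffices to check this for the \emph{adjacent} pairs, i.e.\ for each minimal degeneration $\l \rtordu \mu$: if the smooth locus contained some non-open orbit, it would contain a point in the closure of an orbit $\gr_\mu$ adjacent to $\gr_\l$, and smoothness is an open condition, so $\ov\gr_\l$ would be smooth at a generic point of $\gr_\mu$. Thus the whole statement reduces to proving, for every minimal degeneration, that $\ov\gr_\l$ is singular along $\gr_\mu$.

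**Reducing to a transverse slice.** First I would invoke the transverse-slice description from \cite{MOV}: near a point of $\gr_\mu$, the singularity of $\ov\gr_\l$ is smoothly equivalent to $S(\l,\mu)$, and by Theorem~\ref{th:mov} and the Proposition above, $S(\l,\mu)$ is either a Kleinian singularity of type $A_{\l_\b-1}$ (Case~(1)), a minimal singularity of type $\Phi_I^\vee$ (Case~(2)), or one of the quasi-minimal singularities $ac_n$, $ag_2$, $cg_2$. So the task becomes: each of these model singularities is genuinely singular at the relevant point. For Kleinian singularities this is classical (a type-$A_k$ surface singularity with $k \ge 1$ is not smooth — its local ring is $\CM[x,y,z]/(xy-z^{k+1})$, and $\l_\b \ge 1$ always since $\b = \l-\mu$ with $\l > \mu$ forces $\l_\b \ge \mu_\b + 1 \ge 1$). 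For a minimal singularity $\ov{\mathcal O}_{\min}$ of a simple Lie algebra, the cone point is singular because the tangent space there has dimension strictly larger than $d = \dim \ov{\mathcal O}_{\min}$ — indeed the Zariski tangent space is the whole ambient simple Lie algebra.

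**Using the decomposition numbers / IC invariants as the clean argument.** The slickest uniform argument, and the one consistent with the paper's philosophy, is to read off non-smoothness from the local intersection cohomology invariants $m_\mu(\l,q)$ tabulated in the Proposition. If $\ov\gr_\l$ were smooth along $\gr_\mu$, then $\p\JC_{!*}(\ov\gr_\l,\KM)$ would be (a shift of) the constant sheaf near $\gr_\mu$, forcing $m_\mu(\l,q) = 1$. This already kills Cases~(2), (3), (4) whenever $m_\mu(\l,q) \neq 1$ — which happens precisely when there is at least one long simple root in $\Phi_I$ with an exponent $e_i \ge 2$ (Case~(2)), always in Case~(3) for $n \ge 2$, and always in Case~(4). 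For the residual situations where $m_\mu(\l,q) = 1$ — namely Case~(1), Case~(2) with $\Phi_I$ of type $A_1$ (where $m_\mu = 1$, giving the Kleinian $A_1$), and Case~(5) — one falls back on the explicit local model: the $A_1$ Kleinian surface singularity $xy = z^2$ is not smooth, and the $c_2$ minimal singularity (a closure of a minimal nilpotent orbit in $\mathfrak{sp}_4$, which is a four-dimensional non-smooth variety) is not smooth either. In all cases one concludes $\ov\gr_\l$ is not smooth at points of $\gr_\mu$.

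**Main obstacle.** The genuine content is entirely pushed into the input theorems (the slice description and the invariants $m_\mu(\l,q)$), so the only real work is the bookkeeping over the five Stembridge cases and, in the handful of cases with $m_\mu(\l,q) = 1$, producing the explicit non-smooth local model. The hard part is thus the Case~(5) quasi-minimal $cg_2$ singularity and the $A_1$ Kleinian case, where the cohomological invariant is uninformative and one must argue directly — for $cg_2$ by identifying it (following \cite{MOV}) with a concrete four-dimensional singularity and checking its Zariski tangent space exceeds dimension $4$, and for $A_1$ by the elementary observation that a quadric cone in $\mathbb{A}^3$ is singular at the origin. An alternative, and perhaps the cleanest finish, is to appeal directly to the second part of the paper: for \emph{every} minimal degeneration one exhibits a prime $\ell$ with $d_{\l\mu} \neq 0$, and a non-trivial decomposition number forces failure of $\FM_\ell$-smoothness, hence of smoothness; but since that material comes later, for a self-contained proof here I would use the $m_\mu(\l,q)$ table plus the two elementary local checks.
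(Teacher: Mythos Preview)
Your argument is correct in outline, but it is essentially the Malkin--Ostrik--Vybornov proof that the paper summarizes just after stating the theorem: rational intersection cohomology disposes of the cases where $m_\mu(\l,q)\neq 1$, and the remaining rationally smooth cases (Kleinian $A_k$, the minimal singularity, and $cg_2$) are handled by direct inspection of the local model (MOV use Kumar's criterion for $cg_2$). The paper's \emph{own} proof, given in Section~\ref{sec:non-smooth}, is precisely the alternative you mention at the end and set aside: for \emph{every} minimal degeneration it exhibits a prime $\ell$ with $d_{\l\mu}\neq 0$ (using Sections~\ref{sec:simple}, \ref{sec:min}, \ref{sec:spin} and the tables for $ag_2$, $cg_2$), hence failure of $\FM_\ell$-smoothness, hence of smoothness. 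So the two routes differ exactly where you anticipated: your version stays within characteristic-zero invariants plus ad hoc local checks, while the paper's version is uniform across all five cases and is the point of Part~\ref{part:rep to geom} --- modular coefficients detect singularity even in the rationally smooth cases ($A_k$, $cg_2$) without any explicit tangent-space computation.

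One small slip: your bound ``$\l_\b \ge 1$'' in Case~(1) is too weak as written, since $A_0$ would be smooth; the correct bound is $\l_\b = \mu_\b + \langle\b,\b^\vee\rangle = \mu_\b + 2 \ge 2$, so the Kleinian type is at least $A_1$ (this is also visible from the paper's formula $A_{\l_\b-1}=A_{\mu_\b+1}$). The conclusion is unaffected.
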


They argue as follows.
It is enough to check that $\ov\gr_\l$
is singular along every irreducible component of the boundary
$\ov\gr_\l - \gr_\l$, which are precisely the Schubert varieties
$\ov\gr_\mu$ for all minimal degenerations $\l\rtordu \mu$. So they have
to check that all minimal degenerations are singular, and they can use
their classification. Simple singularities and minimal singularities
are known to be singular; for $ac_n$ and $ag_2$ singularities, they
computed the rational intersection cohomology and found that they are
not rationally smooth, hence they are not smooth; finally, for the
$cg_2$ singularity, which is rationally smooth, they use Kumar's
criterion. The equivariant multiplicity is the integer $27$ divided by
a product of weights, which indicates that it is rationally smooth, but
not smooth (otherwise, the numerator would be $1$). We will provide a
representation-theoretic proof of the theorem in Section \ref{sec:non-smooth}.

They also conjecture that the singularities $a_2$, $ac_2$ and $ag_2$
(resp. $c_2$ and $cg_2$) are pairwise non-equivalent. We will prove
this conjecture in Section \ref{sec:non-equiv}.

\section{A Levi lemma}
\label{sec:levi}

Malkin, Ostrik and Vybornov use the following geometric Levi lemma
\cite[\S 3]{MOV}:
\begin{lemma}
Let $I \subset \D$. If $\l - \mu \in \NM I$, then we have (with
obvious notations):
\[
\Sing(\ov\gr_\l, \gr_\mu) = \Sing(\ov\gr^{L_I^\vee}_{\l_I}, \gr^{L_I^\vee}_{\mu_I})
\]
\end{lemma}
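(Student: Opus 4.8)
The plan is to reduce the statement to a comparison of transverse slices and then to exploit the triangular decomposition of the negative loop group relative to the parabolic $P_I$. Write $L = L_I^\vee \subset G^\vee$, let $P \supset L$ be the standard parabolic with Levi factor $L$, let $U$ be its unipotent radical and $U^-$ the opposite one, and for a closed subgroup $H \subset G^\vee$ put $L^{<0}H = \ker\bigl(H(\CM[t^{-1}]) \to H(\CM)\bigr)$. Recall from \cite{MOV} that, for $\mu \le \l$, the intersection $S(\l,\mu) = \ov\gr_\l \cap L^{<0}G^\vee \cdot [t^\mu]$ is a transverse slice to $\gr_\mu$ in $\ov\gr_\l$ at the base point $[t^\mu]$, so that its germ there represents $\Sing(\ov\gr_\l,\gr_\mu)$; the analogous statement holds, inside the affine Grassmannian $\gr_L = L(\KC)/L(\OC)$, for the Levi slice $S^L(\l,\mu) = \ov\gr^{L}_{\l} \cap L^{<0}L \cdot [t^\mu]$, which represents $\Sing(\ov\gr^{L_I^\vee}_{\l_I},\gr^{L_I^\vee}_{\mu_I})$ --- here $\ov\gr^L_\l$ denotes the closure of the $L(\OC)$-orbit through $[t^\l]$, which makes sense because $\l - \mu \in \NM I$ lies in the coroot lattice of $L$, so that $[t^\l]$ and $[t^\mu]$ lie in the same component of $\gr_L$, and this is the orbit that the ``obvious notations'' of the statement label by $\l_I$. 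Since $\Sing$ depends only on an \'etale neighbourhood, and since the inclusion $\gr_L \hookrightarrow \gr$, $hL(\OC) \mapsto hG^\vee(\OC)$, carries $[t^\mu]$ to $[t^\mu]$, it suffices to prove that $S(\l,\mu)$ coincides, on a neighbourhood of $[t^\mu]$, with $S^L(\l,\mu)$.

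The two ingredients are as follows. First, the multiplication map $L^{<0}U^- \times L^{<0}L \times L^{<0}U \to L^{<0}G^\vee$ is an open immersion onto a neighbourhood of $1$ --- a loop-group avatar of the open cell $U^- \times L \times U \hookrightarrow G^\vee$, obtained by induction on the order of the pole in $t^{-1}$. Second, since $\mu$ is dominant and every root of $U$ (and of $U^-$) has its support meeting $\D \setminus I$, a direct loop-group computation of Cartan coweights shows that, for a nontrivial $u \in L^{<0}U$ (resp. for $u^- \in L^{<0}U^-$, when it moves the relevant point at all), the point $u \cdot [t^\mu]$ (resp. $u^- \cdot (\ell\cdot[t^\mu])$) lies in a stratum $\gr_\nu$ with $\supp(\nu - \mu) \not\subset I$, and this defect in the $\D\setminus I$-directions persists after multiplying by the remaining factors. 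Now take $g \in L^{<0}G^\vee$ near $1$, written as $g = u^- \ell u$ in the factorised form, and suppose $g \cdot [t^\mu] \in \ov\gr_\l$. Then $g\cdot[t^\mu]$ lies in some stratum $\gr_\nu$ with $\mu \le \nu \le \l$, so $\supp(\nu - \mu) \subset I$, because $\b = \l - \mu = (\nu - \mu) + (\l - \nu)$ is a sum of two elements of $\NM\D$ in which no simple root outside $I$ occurs. By the second ingredient this is impossible unless the $U$- and $U^-$-factors contribute no motion in the $\D\setminus I$-directions, which forces $g\cdot[t^\mu] = \ell \cdot [t^\mu] \in \gr_L$. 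Hence near $[t^\mu]$ one has $S(\l,\mu) = \ov\gr_\l \cap \gr_L \cap \bigl(L^{<0}L \cdot [t^\mu]\bigr)$; and since $\ov\gr_\l \cap \gr_L$ is a closed $L(\OC)$-stable subset which, in the component of $[t^\mu]$, equals $\ov\gr^L_\l$ (compatibility of the closure order with restriction to the Levi $L$), this is exactly $S^L(\l,\mu)$ near $[t^\mu]$, which proves the lemma.

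The main obstacle is the second ingredient, the control of the $U$- and $U^-$-directions. Making it precise means computing, for $\a \in \Phi^+ \setminus \Phi_I^+$ and $f \in t^{-1}\CM[t^{-1}]$, the Cartan coweight of $x_{\pm\a}\bigl(f(t^{-1})\bigr)\,t^\mu$ and checking that, after conjugation into the dominant chamber, it always acquires a strictly positive coefficient on some simple root outside $I$ as soon as the relevant pole order is positive, and --- the delicate point --- that composing several such unipotent factors cannot make these contributions cancel. One can also bypass the computation by working in the affine Grassmannian $\gr_{P_I}$ of the parabolic, for which $\gr_{P_I} \to \gr_L$ is an affine bundle, and running the argument there. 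In any case this is precisely the geometric Levi lemma established in \cite[\S 3]{MOV}, which one may also simply invoke.
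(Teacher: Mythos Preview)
The paper does not prove this lemma at all: it is stated as a quotation from \cite[\S 3]{MOV} and used as a black box. So there is no ``paper's own proof'' to compare against; your final sentence --- that one may simply invoke \cite[\S 3]{MOV} --- is exactly what the paper does.

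Your sketch goes well beyond this, and the outline (compare transverse slices, use a triangular factorisation of $L^{<0}G^\vee$ relative to $P_I$, and show that the $U^{\pm}$-factors cannot contribute without leaving the interval $[\mu,\l]$) is the right shape for an argument. But as you yourself flag, the ``second ingredient'' is not established: you assert that a nontrivial $u \in L^{<0}U$ sends $[t^\mu]$ into a stratum $\gr_\nu$ with $\supp(\nu-\mu)\not\subset I$, and that this defect ``persists after multiplying by the remaining factors'', yet the possibility of cancellations among root-subgroup contributions is precisely the point that needs proof and is left open. Likewise the identification $\ov\gr_\l \cap \gr_L = \ov\gr^L_\l$ (on the relevant component) is asserted rather than proved. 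So what you have written is a plausible strategy with an acknowledged gap, resolved at the end by citing \cite{MOV}; for the purposes of this paper, the citation alone suffices.
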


If $\l$ and $\mu$ are as in the lemma, then any pair of weights $\nu
\geq \zeta$ in the interval $[\mu,\l]$ also satisfies this property,
so the lemma can be applied for all of them. Thus, we have the same
intersection cohomology stalks in both situations (for $G$ or $L_I$),
either with ordinary or modular coefficients, for all the interval.
Since we only consider constant local systems, this implies that the
decomposition numbers for perverse sheaves are the same for all the
interval. By the geometric Satake isomorphism, the same is true on the
representation theoretic side. So we recover a Levi lemma which was
already known in representation theory
\cite[\S 5.21 (2) p. 230]{JAN}:
\begin{corollary}
If $I\subset \D$ and $\l - \mu \in \ZM I$, then we have (with obvious notations):
\[
[V(\l) : L(\mu)] = [V_I(\l) : L_I(\mu)]
\]
\end{corollary}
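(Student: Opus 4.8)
The plan is to deduce the representation-theoretic Levi lemma directly from the geometric one stated just above, together with the geometric Satake isomorphism (Theorem of Mirkovi\'c--Vilonen) and the comparison of decomposition numbers in the Corollary. First I would reduce the statement to the case $\l - \mu \in \NM I$ with $\l \ge \mu$: if $\l - \mu \in \ZM I$ but $[V(\l):L(\mu)] \ne 0$, then $\mu \le \l$, so $\l - \mu$ is a non-negative combination of positive roots lying in $\ZM I$, hence in $\NM I$; and the equality $[V_I(\l):L_I(\mu)] = 0$ in the remaining case follows by the same reasoning applied inside $L_I$, so both sides vanish and there is nothing to prove. Thus we may assume $\l - \mu \in \NM I$ with $\mu \le \l$.

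Next I would invoke the geometric Levi lemma of Malkin--Ostrik--Vybornov: for any pair $\nu \ge \zeta$ in the interval $[\mu,\l]$ we still have $\nu - \zeta \in \NM I$, so $\Sing(\ov\gr_\nu, \gr_\zeta) = \Sing(\ov\gr^{L_I^\vee}_{\nu_I}, \gr^{L_I^\vee}_{\zeta_I})$. A smooth equivalence of singularities identifies the local intersection cohomology stalks of the IC complexes (with constant coefficients) on the two sides, and this holds both with coefficients in $\KM$ and with coefficients in $\FM$, since the construction of $\p\JC_{!*}$ and modular reduction commute with the relevant smooth maps. Consequently the decomposition numbers $d^\gr_{\nu\zeta}$ computed on the affine Grassmannian of $G^\vee$ agree with those $d^{\gr^{L_I^\vee}}_{\nu_I \zeta_I}$ computed on the affine Grassmannian of $L_I^\vee$, for every pair in the interval. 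Here I would note that one only needs the stalks and the multiplicities $[\FM\p\JC_{!*}(\nu,\OM):\p\JC_{!*}(\zeta,\FM)]$, which are determined by the local intersection cohomology over $\OM$ in the interval, exactly as recalled after the statement of Theorem~\ref{th:mov}.

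Finally I would transport this equality through the geometric Satake equivalence. On the $G^\vee$ side, $d^\gr_{\l\mu} = d^G_{\l\mu} = [V(\l):L(\mu)]$ by the Corollary; on the $L_I^\vee$ side, the affine Grassmannian of $L_I^\vee$ is (a union of connected components of, but in any case the relevant piece is) the affine Grassmannian of the Langlands dual of $L_I$, so the same Corollary applied to $L_I$ gives $d^{\gr^{L_I^\vee}}_{\l_I \mu_I} = [V_I(\l):L_I(\mu)]$. Combining the three equalities yields $[V(\l):L(\mu)] = [V_I(\l):L_I(\mu)]$, which is the claim. The main obstacle, and the only point needing care, is the compatibility of the smooth-equivalence identification of singularities with modular coefficients: one must check that the transverse-slice maps used by Malkin--Ostrik--Vybornov to establish $\Sing(\ov\gr_\l,\gr_\mu) = \Sing(\ov\gr^{L_I^\vee}_{\l_I},\gr^{L_I^\vee}_{\mu_I})$ are smooth morphisms (not merely equivalences over $\CM$), so that $\FM\p\JC_{!*}$ pulls back correctly and the integral local intersection cohomology — hence all the decomposition numbers in the interval — really does match on both sides.
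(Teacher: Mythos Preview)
Your proposal is correct and follows essentially the same route as the paper: apply the geometric Levi lemma of Malkin--Ostrik--Vybornov to every pair $\nu \ge \zeta$ in the interval $[\mu,\l]$ to identify the intersection cohomology stalks (in both characteristic zero and characteristic $\ell$), deduce equality of the perverse-sheaf decomposition numbers, and then transport through geometric Satake on both sides. Your explicit reduction from $\l - \mu \in \ZM I$ to $\l - \mu \in \NM I$ and your remark on the smoothness of the transverse-slice maps are careful additions that the paper leaves implicit.
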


\section{Simple singularities}
\label{sec:simple}

\begin{theorem}
In case \eqref{case:simple}, we have
$d_{\l\mu} = 1$ if $\ell$ divides $\l_\b$, and $0$ otherwise.
\end{theorem}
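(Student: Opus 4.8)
The plan is to transport the geometric description of Malkin--Ostrik--Vybornov (Theorem~\ref{th:mov}) through the geometric Satake equivalence, and then to read off the answer from the decomposition numbers of Kleinian surface singularities computed in \cite{decperv}.

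First I would use the Corollary to the theorem of Mirkovi\'c--Vilonen to replace $d_{\l\mu}$ by the geometric decomposition number $d^\gr_{\l\mu} = [\p\JC_!(\l,\FM):\p\JC_{!*}(\mu,\FM)]$, so that everything happens on $\gr$. In Case~\eqref{case:simple} the class $\b = \l-\mu$ is a simple root, $\l \rtordu \mu$ is a minimal degeneration, and by Theorem~\ref{th:mov} the transverse slice $S = S(\l,\mu)$ to $\gr_\mu$ inside $\ov\gr_\l$ is the Kleinian singularity of type $A_n$ with $n = \l_\b - 1$; it carries just two strata, its smooth locus $S^\circ$ of complex dimension $2$ and the point $s$ lying over $\mu$. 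Since only constant coefficients occur, $d_{\l\mu}$ is a local invariant near a point of $\gr_\mu$: $*$-restricting to $S$ --- and using that $\p\JC_!(\l,\OM)\simeq\p\JC_{!*}(\l,\OM)$ by Mirkovi\'c--Vilonen, so that it restricts to the intersection cohomology complex of $S$ in a way compatible with modular reduction --- identifies $d_{\l\mu}$ with the multiplicity of the skyscraper $i_{s*}\FM$ as a composition factor of the perverse $!$-extension $\p j_!(\FM_{S^\circ}[2])$ to $S$. This localisation is one of the basic properties of decomposition numbers established in \cite{decperv}, and it is the point of view already adopted in \cite{MOV}.

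It then remains to recall the answer for the Kleinian singularity $A_n$ from \cite{decperv}. The link of $s$ in $S$ is the lens space $\Lambda = S^3/\mu_{n+1}$, a rational homology $3$-sphere with $H_1(\Lambda,\ZM) = \ZM/(n+1)$; in particular $H^1(\Lambda,\OM) = 0$, and checking the support and cosupport conditions gives $\p j_!(\OM_{S^\circ}[2]) \simeq \p j_*(\OM_{S^\circ}[2]) \simeq \OM_S[2]$, the shifted constant sheaf, whence also $\p j_!(\FM_{S^\circ}[2]) \simeq \FM_S[2]$ after modular reduction. On the other hand $\p\JC_{!*}(\mu,\FM)$ restricts to $\ic_S(\FM)$, whose stalk at $s$ is $\tau_{\le -1}$ of $R\G(\Lambda,\FM)[2]$ and therefore involves $H^1(\Lambda,\FM) = \Hom(\ZM/(n+1),\FM)$. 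If $\ell \nmid n+1$ this group vanishes, so $\ic_S(\FM) \simeq \FM_S[2] \simeq \p j_!(\FM_{S^\circ}[2])$ and $d_{\l\mu}=0$; if $\ell \mid n+1$ it is $\cong \FM$, the canonical surjection $\FM_S[2] = \p j_!(\FM_{S^\circ}[2]) \onto \ic_S(\FM)$ has a nonzero kernel supported at $s$, and comparing stalk Euler characteristics at $s$ shows this kernel is $i_{s*}\FM$ with multiplicity exactly one. Since $n+1 = \l_\b$, this gives $d_{\l\mu} = 1$ when $\ell \mid \l_\b$ and $0$ otherwise.

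I expect the only genuine work to be the content of the last paragraph, namely the explicit identification of the modular reduction of the intersection cohomology complex of $\ov\gr_\l$ near $\mu$ with the constant sheaf, together with the skyscraper correction when $\ell \mid \l_\b$; but for Kleinian singularities this is precisely what is carried out in \cite{decperv}. Over $\KM$ there is nothing to see: by the Proposition, $d = 2$ and $m_\mu(\l,q) = 1$, which merely records that $\Lambda$ is a rational homology sphere, so the entire effect lives in characteristic $\ell$.
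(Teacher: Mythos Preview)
Your approach is exactly the paper's: invoke Theorem~\ref{th:mov} to identify the minimal degeneration with a Kleinian $A_{\l_\b-1}$ singularity, then appeal to the computation of the corresponding perverse decomposition number in \cite[\S 4.3]{decperv}; the paper's proof is a one-line citation, whereas you have (correctly) unpacked the lens-space calculation that lies behind it. One slip: the sentence ``$\p\JC_{!*}(\mu,\FM)$ restricts to $\ic_S(\FM)$'' should read $\l$ rather than $\mu$ --- it is the IC of $\ov\gr_\l$ that restricts to $\ic_S(\FM)$, while $\p\JC_{!*}(\mu,\FM)$ restricts to the skyscraper $i_{s*}\FM$ --- but your subsequent use of $\ic_S(\FM)$ is correct and the argument goes through unchanged.
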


\begin{proof}
The decomposition number for a simple singularity is given in
\cite[\S 4.3]{decperv}.
\end{proof}

Thus we recover geometrically a result that can be found in
\cite[Cor. 6.24 p.249]{JAN}.

\section{Minimal singularities}
\label{sec:min}

\begin{theorem}
In case \eqref{case:short}, we have
$d_{\l\mu} = \dim_{\FM_\ell} \FM_\ell \otimes_\ZM
P(\Phi_{I_\courte})/Q(\Phi_{I_\courte})$,
where $I_\courte$ is the set of roots of minimal length in $I$, for any
$W_I$-invariant scalar product.
\end{theorem}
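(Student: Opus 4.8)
The strategy is to combine the geometric Satake dictionary (Corollary following the Mirkovi\'c--Vilonen theorem) with the explicit computation of decomposition numbers for minimal singularities from \cite{decperv}. In case \eqref{case:short}, Theorem \ref{th:mov} tells us that $\Sing(\ov\gr_\l, \gr_\mu)$ is the minimal singularity of the type of $\Phi_I^\vee$. Since the local intersection cohomology and hence the decomposition number $d_{\l\mu}$ depend only on this transverse singularity (we only ever consider constant local systems), we have $d_{\l\mu} = d^{\min}_{\Phi_I^\vee}$, where the right-hand side denotes the decomposition number between the zero orbit and the minimal orbit in the nilpotent cone of a simple Lie algebra with root system $\Phi_I^\vee$. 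So the first step is simply to reduce the statement to this intrinsic invariant of the minimal singularity of type $\Phi_I^\vee$.

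The second step is to recall from \cite[\S 4]{decperv} the formula for $d^{\min}$ for a minimal singularity. The computation there goes through the integral cohomology of the minimal nilpotent orbit, computed in \cite{cohmin}: the relevant torsion in $H^\bullet$ of the minimal orbit governs the failure of the integral $\p\JC_{!*}$ to have torsion-free stalks, and this torsion is exactly the finite group $P(\Phi')/Q(\Phi')$ attached to the subsystem $\Phi'$ of short coroots — equivalently, the short roots of $\Phi_I^\vee$, which correspond under $\vee$ to the \emph{long} roots generating a subsystem, or, stated directly in $\Phi_I$, to the roots of minimal length $I_\courte$. One must be careful transporting the statement across the duality $\Phi_I \leftrightarrow \Phi_I^\vee$: a short root of $\Phi_I^\vee$ is the coroot of a long root of $\Phi_I$, so the subsystem that appears is generated by the short coroots of $\Phi_I^\vee$, i.e.\ by $\{\a^\vee : \a \in I_\courte\}$ where $I_\courte$ consists of the minimal-length roots of $I$. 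The decomposition number is then $\dim_{\FM_\ell} \FM_\ell \otimes_\ZM P(\Phi_{I_\courte})/Q(\Phi_{I_\courte})$, which is nonzero precisely when $\ell$ divides the order of that weight-lattice-mod-root-lattice quotient. Putting the two steps together yields the claimed formula.

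The main obstacle is the bookkeeping around the Langlands/root-system duality and the identification of which subsystem of short roots is responsible for the torsion. One has to match three things: (a) the type of the minimal singularity produced by Theorem \ref{th:mov}, namely $\Phi_I^\vee$; (b) the torsion group in the cohomology of the minimal orbit as computed in \cite{cohmin}, which is naturally indexed by the subsystem generated by the short roots (resp.\ short coroots) of the ambient system; and (c) the description of that subsystem back in terms of $\Phi_I$ itself, giving $I_\courte$, the minimal-length roots for a fixed $W_I$-invariant form. Once these identifications are pinned down — and they are purely combinatorial, type-by-type if necessary, or via the observation that short roots of $\Phi_I^\vee$ are long roots of $\Phi_I$ dualized — the proof is immediate. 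A secondary point to check is that the minimal singularity of type $\Phi_I^\vee$ is the one studied in \cite{decperv} even when $\Phi_I$ is not simply laced, but this is exactly the content of the computations in \cite{cohmin, decperv}, so it may be invoked directly.
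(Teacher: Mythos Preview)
Your overall strategy is exactly the paper's: invoke Theorem~\ref{th:mov} to identify the transverse slice as the minimal singularity of type $\Phi_I^\vee$, then read off the decomposition number from \cite{cohmin} and \cite[\S 5]{decperv}, translating the answer back through the duality $\Phi_I \leftrightarrow \Phi_I^\vee$. That is the whole proof.

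However, your duality bookkeeping --- which you correctly flag as the only real content --- is garbled. You write that the relevant subsystem is that of the \emph{short} roots of $\Phi_I^\vee$, and then a few lines later that this corresponds ``stated directly in $\Phi_I$, to the roots of minimal length $I_\courte$''. These two claims are incompatible: under $\a \mapsto \a^\vee$, short roots of $\Phi_I$ go to \emph{long} roots of $\Phi_I^\vee$, and vice versa. The formula in \cite{decperv} for a minimal singularity of type $\Psi$ involves the subsystem generated by the \emph{long} simple roots of $\Psi$; when $\Psi = \Phi_I^\vee$, these long simple roots are precisely $\{\a^\vee : \a \in I_\courte\}$, i.e.\ the coroots of the short simple roots of $\Phi_I$. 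This is what the paper means by ``the long simple coroots correspond to the short simple roots''. Since $\Phi_{I_\courte}$ is simply laced, the resulting subsystem of $\Phi_I^\vee$ is isomorphic to $\Phi_{I_\courte}$ and has the same $P/Q$, so the final formula you state is correct --- but the chain of identifications you wrote to get there needs to be fixed.
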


\begin{proof}
We have a minimal singularity of the type of $\Phi_I^\vee$.
By \cite{cohmin} and \cite[\S 5]{decperv}, the decomposition number modulo $\ell$ is
given by this formula: the long simple coroots correspond to the short
simple roots.
\end{proof}

Thus we recover a result of Cline, Parshall and Scott
\cite[Theorem 1.1]{CPScohI}. Their formulation is slightly different
(they use the rank of the Cartan matrix of $\Phi_{I_\courte}$), but it
is easily seen to be equivalent (the Cartan matrix of a root system is
the matrix of the inclusion of the coroot lattice into the coweight
lattice, for some choice of basis of simple coroots, determining the
basis of fundamental coweights; moreover, the finite abelian groups $P/Q$ and
$P^\vee/Q^\vee$ are in duality).

\section{On the torsion of the stalks}
\label{sec:torsion}

Mirkovi\'{c} and Vilonen conjectured \cite[Conjecture 13.3]{MV} that the
stalks of the standard sheaves
$\p\JC_!(\l,\ZM_\ell) = \p\JC_{!*}(\l,\ZM_\ell)$
 are torsion-free (they actually state this conjecture for
$\ZM$ coefficients, but it is equivalent to this property holding for
$\ZM_\ell$ coefficients for all prime numbers $\ell$).

But we explained in \cite{decperv} that the stalk at $0$ of the
intersection cohomology complex of a minimal singularity is given by
the first half of the cohomology of the minimal nilpotent orbit, and
by the calculations in \cite[\S 3]{cohmin}, we see that
there is $\ell$-torsion for $\ell = 2$ in types $B_n$, $C_n$, $D_n$, $F_4$
for $\ell = 2$, $3$ in types $E_6$, $E_7$, for $\ell = 2$, $3$, $5$
in type $E_8$, and for $\ell = 3$ in type $G_2$. However, we note that
for a minimal singularity, all the primes that appear are bad primes
for $G^\vee$ (recall that we consider the affine Grassmannian for
$G^\vee$). Therefore, we may propose the following modified conjecture:

\begin{conjecture}
If $\ell$ is good for $G^\vee$, then the stalks of the standard
perverse sheaf $\JC_!(\l,\ZM_\ell) = \JC_{!*}(\l,\ZM_\ell)$ are torsion-free.
\end{conjecture}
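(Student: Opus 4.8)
Fix $\l$ and a point of an orbit $\gr_\mu$ with $\mu\le\l$, and let $C=i_\mu^*\,\p\JC_{!*}(\l,\ZM_\ell)$ be the resulting stalk, a bounded complex of finitely generated $\ZM_\ell$-modules. By the results quoted after the theorem of Mirkovi\'c and Vilonen, applied with $\OM=\ZM_\ell$, one has $\p\JC_!(\l,\ZM_\ell)\simeq\p\JC_{!*}(\l,\ZM_\ell)$, while $\FM\,\p\JC_{!*}(\l,\ZM_\ell)\simeq\p\JC_!(\l,\FM)$ and $\KM\otimes_{\ZM_\ell}\p\JC_{!*}(\l,\ZM_\ell)\simeq\p\JC_{!*}(\l,\KM)$; since $i_\mu^*$ commutes with $\FM(-)$ and with $\KM\otimes_{\ZM_\ell}(-)$, this gives $\FM\,C\simeq i_\mu^*\,\p\JC_!(\l,\FM)$ and $\KM\otimes_{\ZM_\ell}C\simeq i_\mu^*\,\p\JC_{!*}(\l,\KM)$. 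I would then use the elementary fact that a bounded complex $C$ of finitely generated $\ZM_\ell$-modules whose cohomology becomes concentrated in even degrees after \emph{both} $\KM\otimes_{\ZM_\ell}(-)$ and $\FM(-)$ has free cohomology in every degree: writing $\HC^k(C)=\ZM_\ell^{r_k}\oplus T^k$ with $T^k$ finite, the first condition forces $r_k=0$ for $k$ odd, and then for $k$ odd the universal coefficient sequence $0\to\FM\otimes_{\ZM_\ell}\HC^k(C)\to\HC^k(\FM\,C)\to\Tor_1^{\ZM_\ell}(\FM,\HC^{k+1}(C))\to 0$ has vanishing middle term, which forces $\FM\otimes_{\ZM_\ell}T^k=0$ and $\Tor_1^{\ZM_\ell}(\FM,T^{k+1})=0$, whence $T^k=0$ for all $k$ of either parity. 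Thus the conjecture is equivalent to the conjunction of: (a) the stalks of $\p\JC_{!*}(\l,\KM)$ lie in even degrees; and (b) for $\ell$ good for $G^\vee$, the stalks of the \emph{standard} sheaf $\p\JC_!(\l,\FM)$ lie in even degrees.

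\textbf{The two ingredients.} Part (a) is classical and independent of $\ell$: the rational local intersection cohomology of $\ov\gr_\l$ vanishes in odd degrees, this being the parity vanishing on the affine Grassmannian (Lusztig), equivalently the fact that the rational IC stalks are given by the $q$-analogue of weight multiplicities. For (b) it would be a mistake to try to prove that each composition factor $\p\JC_{!*}(\mu,\FM)$ of $\p\JC_!(\l,\FM)$ is $*$-even, because this already fails at certain good primes: for the type-$A$ minimal degeneration $\l\rtordu\mu$ of case \eqref{case:short}, with $\ell$ dividing the order of $P/Q$ of the relevant root subsystem, $\ell$ is good for $G^\vee$ but $d_{\l\mu}\neq 0$, and a computation with the cohomology of the minimal nilpotent orbit in the spirit of \cite{cohmin} shows that the modular intersection cohomology complex of the slice $\Sing(\ov\gr_\l,\gr_\mu)$ of Theorem \ref{th:mov} then has a nonzero stalk in an odd degree. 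What happens for good $\ell$ is rather that these odd contributions \emph{cancel} inside $\p\JC_!(\l,\FM)$: in the short exact sequence $0\to K\to\p\JC_!(\l,\FM)\to\p\JC_{!*}(\l,\FM)\to 0$ with $K$ supported on the boundary, the connecting maps on stalks absorb the odd cohomology of the IC sheaf into $K$. To establish this in general I would induct on the dominance order, using the distinguished triangle $\p\tau_{\le-1}\bigl(j_{\l!}\,\FM[d_\l]\bigr)\to j_{\l!}\,\FM[d_\l]\to\p\JC_!(\l,\FM)\to$: the middle term is trivially $*$-even, so it suffices to show the left-hand term is $*$-even, and that term is built from the lower perverse cohomology sheaves of $j_{\l!}\,\FM[d_\l]$, which are supported on the Schubert varieties $\ov\gr_\nu$ with $\nu<\l$ and, via the local description of $\gr$ along the boundary $\ov\gr_\l\smallsetminus\gr_\l$ due to Malkin, Ostrik and Vybornov \cite{MOV}, should be accessible to the inductive hypothesis.

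\textbf{Main obstacle.} The difficulty is entirely in making this inductive step precise, which amounts to a uniform torsion-freeness statement, valid for $\ell$ good, for the integral local intersection cohomology of \emph{all} transverse slices $S(\l,\mu)\subset\gr$ — not only the minimal degenerations, where $S(\l,\mu)$ is a Kleinian, minimal or quasi-minimal singularity and the question is settled by the computations of \cite{decperv} and \cite{cohmin} (in which, tellingly, every torsion prime turns out to be bad for $G^\vee$), but also the non-minimal pairs $\mu<\l$, whose slices are more complicated iterated or product-type singularities. It is precisely in showing that the only torsion in the stalks of the integral IC complexes on $\gr$ occurs at bad primes that the good-prime hypothesis must be used essentially, and where I expect the real work to lie; one may also hope to organise the argument through the theory of parity sheaves, but one must keep in mind that it is the standard sheaf, not the IC sheaf, whose evenness is at stake. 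A complementary, and possibly cleaner, route is representation-theoretic: the hyperbolic-localization description of \cite{MV} identifies the cohomology of $\p\JC_{!*}(\l,\ZM_\ell)$ along the semi-infinite orbits with the weight spaces of the integral Weyl module $V_{\ZM_\ell}(\l)$, which are free $\ZM_\ell$-modules, and one would want to pass from these to the full stalks using that, for good $\ell$, the relevant Kostant $\ZM$-structures are torsion-free — a proof in this vein would fit the philosophy of the present paper.
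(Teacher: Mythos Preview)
The statement you are addressing is a \emph{conjecture} in the paper, not a theorem: the paper offers no proof and does not claim one. It merely proposes this as the corrected form of the Mirkovi\'c--Vilonen conjecture after exhibiting counterexamples at bad primes. So there is no ``paper's own proof'' to compare against, and your task was really to assess whether you have a proof at all.

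Your reduction step is correct and worth recording: given that the rational IC stalks on $\gr$ are even (Lusztig), torsion-freeness of the stalks of $\p\JC_{!*}(\l,\ZM_\ell)$ is equivalent to $*$-evenness of $\p\JC_!(\l,\FM_\ell)$. The universal-coefficient argument you give for this equivalence is sound. This is a genuine reformulation in the language of parity, and it is essentially the shape in which the conjecture was later attacked (and proved) via parity sheaves by Achar--Rider, building on Juteau--Mautner--Williamson.

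But from that point on you do not have a proof, and you say so yourself. The inductive scheme you sketch through the triangle
\[
\p\tau_{\le -1}\bigl(j_{\l!}\FM[d_\l]\bigr)\longrightarrow j_{\l!}\FM[d_\l]\longrightarrow \p\JC_!(\l,\FM)\longrightarrow
\]
does not close: the left-hand term is built out of perverse cohomology sheaves supported on the boundary, but there is no mechanism that expresses these as iterated extensions of \emph{standard} sheaves $\p\JC_!(\nu,\FM)$ for $\nu<\l$, which is what the inductive hypothesis would give you. Knowing the singularities of the minimal degenerations (Malkin--Ostrik--Vybornov) controls only the codimension-one steps and says nothing about deeper strata, as you note. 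Likewise, the ``representation-theoretic route'' via hyperbolic localisation identifies global cohomology (weight spaces), not stalks, so the passage you allude to is precisely the missing content. In short: a clean reformulation, an honest identification of the obstacle, but no proof --- which is consistent with the paper's decision to state this as a conjecture.
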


\part{From representation theory to geometry}
\label{part:rep to geom}

\section{A decomposition number for $G = \Spin_{2n + 1}$}
\label{sec:spin}

In \cite{MOV}, it is conjectured that the quasi-minimal singularities
are not equivalent to minimal singularities. We we will be
able to prove these non-equivalences, using decomposition numbers for
reductive groups. In order to deal with the $ac_n$ singularity, we
will determine the corresponding decomposition number for $G =
\Spin_{2n + 1}$, the simply-connected simple group of type $B_n$ (we
have not found it in the literature). For the proof that the
singularities $a_n$ and $ac_n$ are not equivalent, we actually only
need a much weaker statement, which gives a necessary condition for
the decomposition number to be non-zero, and which follows from the
strong linkage principle (see remark \ref{rem:linkage}). But for the
representation theoretic proof of non-smoothness, we need a
non-vanishing result. Besides, along the way we will calculate a
bilinear form which is likely to be interpreted geometrically.
Let us mention that the tables in \cite{LUB} were very useful to
conjecture the result of this section.

We will use the notation of Bourbaki for the root system $\Phi$ of
type $B_n$. So we view $\Phi$ in $\ZM^n = \bigoplus_{i = 1}^n \ZM
\e_i$, and the simple roots are $\a_i = \e_i - \e_{i + 1}$, for $1
\leq i \leq n - 1$, and $\a_n = \e_n$. We denote by $W$ the Weyl
group, and by $(\cdot|\cdot )$ the $W$-invariant perfect pairing for which
$(\e_1,\ldots,\e_n)$ is orthonormal. We have
$\rho = \varpi_1 + \cdots \varpi_n
= \frac{1}{2}((2n - 1)\e_1 + (2n - 3)\e_2 + \cdots + \e_n)$.

The decomposition number we are interested is $d_{\l\mu}$, where
$\l = \varpi_1 + \varpi_n = \frac{1}{2}(3\e_1 + \e_2 + \cdots + \e_n)$,
and $\mu = \varpi_n = \frac{1}{2}(\e_1 + \cdots + \e_n)$.
We note that $\l - \mu = \varpi_1 = \e_1 = \a_1 + \cdots + \a_n$,
and $\l \rtordu \mu$ is a minimal degeneration.

\begin{lemma}
\label{lemma:orbits}
We have $|W.\l| = n.2^n$ and $|W.\mu| = 2^n$.
\end{lemma}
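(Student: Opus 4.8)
The plan is to use the standard realization of the Weyl group $W$ of type $B_n$ as the hyperoctahedral group of signed permutations: $W$ acts on $\ZM^n = \bigoplus_{i=1}^n \ZM\e_i$ (and on its real span) by permuting the coordinates and changing their signs, so that $|W| = 2^n\, n!$. Since $|W.\nu| = |W| / |\Stab_W(\nu)|$ for any weight $\nu$, everything reduces to computing the stabilizers of $\l$ and $\mu$, and for that the only thing one really needs is the elementary observation that $\l$ and $\mu$ have \emph{half-integer} coordinates, so no non-trivial sign change can stabilize them: flipping a sign always produces a coordinate equal to $-\tfrac12$ or $-\tfrac32$, which does not occur. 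Hence in both cases $\Stab_W(\nu)$ is a pure permutation subgroup of $W$.

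For $\mu = \tfrac12(\e_1 + \cdots + \e_n)$ all coordinates are equal, so every coordinate permutation fixes $\mu$; thus $\Stab_W(\mu)$ is the full symmetric group on the $n$ coordinates, of order $n!$, and $|W.\mu| = 2^n\, n! / n! = 2^n$. (Equivalently, one sees directly that $W.\mu = \{\,\tfrac12(\pm\e_1 \pm \cdots \pm \e_n)\,\}$, of cardinality $2^n$.) For $\l = \tfrac12(3\e_1 + \e_2 + \cdots + \e_n)$ the coordinate multiset is $\{\tfrac32, \tfrac12, \ldots, \tfrac12\}$, so a coordinate permutation fixes $\l$ if and only if it fixes the unique position carrying the value $\tfrac32$; such permutations form a symmetric group on the remaining $n-1$ coordinates, of order $(n-1)!$. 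Therefore $|W.\l| = 2^n\, n! / (n-1)! = n\cdot 2^n$. (Directly: an element of $W.\l$ is determined by the position $i$ and sign of the entry $\pm\tfrac32$, giving $2n$ choices, together with the $n-1$ independent signs of the remaining $\pm\tfrac12$ entries, giving $2^{n-1}$ choices, for a total of $2n \cdot 2^{n-1} = n\cdot 2^n$.)

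I do not expect any genuine obstacle here; the statement is purely combinatorial once $W$ is written as signed permutations. The only place to be slightly careful is to record explicitly that the half-integrality of the entries forbids all sign changes in the stabilizer — it is this remark that collapses the computation to the order of a symmetric group — and to note that the unequal first coordinate of $\l$ is what distinguishes its orbit count ($n\cdot 2^n$) from that of $\mu$ ($2^n$).
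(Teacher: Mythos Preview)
Your proof is correct and, like the paper's, rests on the orbit--stabilizer formula $|W.\nu| = |W|/|W_\nu|$. The difference lies only in how the stabilizer is identified: the paper invokes the general fact that for a dominant weight $\nu$ one has $W_\nu = W_{I_\nu}$ with $I_\nu = \{\a \in \D : \langle \nu, \a^\vee\rangle = 0\}$, and then reads off $I_\l = \{\a_2,\ldots,\a_{n-1}\}$ (type $A_{n-2}$, order $(n-1)!$) and $I_\mu = \{\a_1,\ldots,\a_{n-1}\}$ (type $A_{n-1}$, order $n!$). You instead work directly with the signed-permutation realization of $W$ and use the positivity of the half-integer coordinates to exclude all sign changes from the stabilizer, reducing to a pure symmetric-group count. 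Both routes land on stabilizers of the same orders; yours is more elementary and self-contained, while the paper's is the standard structural argument that generalizes at once to arbitrary dominant weights in any type.
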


\begin{proof}
Since $\l$ is dominant, we have $W_\l = W_{I_\l}$, where
$I_\l = \{\a_2,\ldots,\a_{n-1}\}$ is the set of simple roots orthogonal
to $\l$. It is a Weyl group of type $A_{n - 2}$. So we have 
\[
|W.\l| = |W:W_\l| = \frac{2^n n!}{(n-1)!} = n.2^n.
\]
Similarly, we have $W_\mu = W_{I_\mu}$, with
$I_\mu = \{\a_1,\ldots,\a_{n-1}\}$.
It is a Weyl group of type $A_{n - 1}$. So we have
\[
|W.\mu| = |W : W_\mu| = \frac{2^n n!}{n!} = 2^n.
\]
\end{proof}

\begin{lemma}
\label{lemma:weyl char}
The characters of the Weyl modules $V(\l)$ and $V(\mu)$ are given by
\[
\chi(\l) = \sum_{w\in W/W_\l} e(w.\l)\ +\ n \sum_{w \in W/W_\mu} e(w.\mu)
\]
and
\[
\chi(\mu) = \sum_{w \in W/W_\mu} e(w.\mu).
\]
\end{lemma}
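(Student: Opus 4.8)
The plan is to reduce the two character formulas to the (elementary) characters of the natural representation $V(\varpi_1)$ and of the spin representation $V(\varpi_n)$, linked by a single tensor-product identity, and then to finish by a purely combinatorial bookkeeping with signed vectors.

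The formula for $\chi(\mu)$ is immediate: $\mu=\varpi_n$ is the minuscule fundamental weight of $B_n$, so the weights of $V(\mu)$ are exactly the orbit $W.\mu=\{\tfrac12(\pm\e_1\pm\dots\pm\e_n)\}$, each with multiplicity one; hence $\chi(\mu)=\sum_{w\in W/W_\mu}e(w.\mu)$. (Equivalently, $\mu$ is the only dominant weight $\le\mu$ in the coset $\mu+Q$, so every weight of $V(\mu)$ is extremal.) For $\chi(\l)$ I would pass through $V(\varpi_1)\otimes V(\varpi_n)$, whose character is $\chi(\varpi_1)\chi(\varpi_n)$ with $\chi(\varpi_1)=1+\sum_{i=1}^n\bigl(e(\e_i)+e(-\e_i)\bigr)$. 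The crucial step is the identity
\[
\chi(\varpi_1)\chi(\varpi_n)=\chi(\l)+\chi(\mu),
\]
i.e.\ $V(\varpi_1)\otimes V(\varpi_n)\cong V(\l)\oplus V(\mu)$. I would obtain this from the Brauer--Klimyk formula: expanding $\chi(\varpi_n)\chi(\varpi_1)$ over the weights $0,\pm\e_i$ of $V(\varpi_1)$ gives $\widetilde\chi_{\varpi_n}+\sum_{i=1}^n(\widetilde\chi_{\varpi_n+\e_i}+\widetilde\chi_{\varpi_n-\e_i})$, where $\widetilde\chi_\xi$ is the straightened character ($0$ if $\xi+\rho$ lies on a reflection wall, and $\pm\chi_{\xi'}$ with $\xi'+\rho$ dominant otherwise). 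Since $\varpi_n+\rho=(n,n-1,\dots,1)$, one checks directly that $\varpi_n+\e_i+\rho$ and $\varpi_n-\e_i+\rho$ have two equal coordinates (or a vanishing last one) for every admissible $i$ except $\varpi_n+\e_1=\l$, so all those terms drop out and only $\chi(\varpi_n)+\chi(\l)$ survives.

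Granting the identity, $\chi(\l)=\chi(\varpi_1)\chi(\varpi_n)-\chi(\mu)=\sum_{w\in W/W_\mu}\sum_{i=1}^n\bigl(e(w.\mu+\e_i)+e(w.\mu-\e_i)\bigr)$, and it remains to sort these terms. Writing $w.\mu=\tfrac12\eta$ with $\eta\in\{\pm1\}^n$, the weight $\tfrac12\eta\pm\e_i$ has $i$-th coordinate $\pm\tfrac32$ or $\pm\tfrac12$; in the first case it lies in $W.\l$, and in the second case it is precisely the sign-flip of $w.\mu$ at position $i$, an element of $W.\mu$. Counting the $W.\mu$-contributions: each element of $W.\mu$ arises as such a sign-flip exactly $n$ times (once per position $i$), producing the term $n\sum_{w\in W/W_\mu}e(w.\mu)$. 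Counting the $W.\l$-contributions: an element of $W.\l$ is determined by the position and sign of its unique $\pm\tfrac32$ coordinate together with the remaining $2^{n-1}$ signs, and each such element arises exactly once, producing $\sum_{w\in W/W_\l}e(w.\l)$. Summing the two contributions gives the first formula; as a consistency check the dimensions match, $n2^n+n2^n=(2n+1)2^n-2^n$, in agreement with Lemma~\ref{lemma:orbits}.

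The only genuinely non-routine point is the tensor-product identity $V(\varpi_1)\otimes V(\varpi_n)\cong V(\l)\oplus V(\mu)$; the rest is either standard (characters of the defining and spin representations) or mechanical sign-counting. The Brauer--Klimyk computation is the cleanest way to secure it, because it bypasses having to enumerate the dominant weights below $\l$ and to pin down the multiplicity of $V(\mu)$ by a separate dimension argument (although that alternative also works, using that $\l\rtordu\mu$ is a minimal degeneration together with $\l-\mu=\varpi_1\in Q$ to control the coset $\l+Q$).
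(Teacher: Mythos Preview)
Your proof is correct but follows a genuinely different route from the paper's. The paper argues as you do for $\chi(\mu)$, then observes that the only dominant weight strictly below $\l$ is $\mu$, so the whole content of the lemma is the single integer $\dim V(\l)_\mu$; it computes this directly via Freudenthal's recursion, obtaining
\[
\bigl((\l+\rho\mid\l+\rho)-(\mu+\rho\mid\mu+\rho)\bigr)\dim V(\l)_\mu
= 2\sum_{\a\in\Phi^+}\sum_{m\ge 1}(\mu+m\a\mid\a)\dim V(\l)_{\mu+m\a},
\]
which evaluates to $(2n+1)\cdot\dim V(\l)_\mu = n(2n+1)$, hence $\dim V(\l)_\mu=n$. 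Your approach instead establishes the tensor identity $\chi(\varpi_1)\chi(\varpi_n)=\chi(\l)+\chi(\mu)$ by Brauer--Klimyk and then reads off the multiplicities by an explicit sign-vector count. Both arguments are short and self-contained; Freudenthal is slightly more direct if one only wants the number $n$, whereas your method has the pleasant side effect of proving $V(\varpi_1)\otimes V(\varpi_n)\cong V(\l)\oplus V(\mu)$, which is precisely the embedding the paper invokes immediately afterwards when it passes to the Kashiwara--Nakashima tableaux description of $V(\l)_\mu$.
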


\begin{proof}
The result for $\chi(\mu)$ is clear since $\mu$ is a minuscule weight.

The only dominant weight below $\l$ (in the usual order) is $\mu$.
By $W$-invariance, the multiplicity of the weights in the orbit of
$\l$ is one, and so we only have to determine the multiplicity of the
weights in the orbit of $\mu$.
For this, we can use Freudenthal's formula
\cite[Chap. 8, \S 9, ex. 5]{BOUR456}, which in our case is more
convenient that Weyl's formula.

\[
\begin{array}{l}
\left( (\l + \rho \mid \l + \rho) - (\mu + \rho \mid \mu + \rho) \right)
\dim V(\l)_\mu
\\
\T
\quad = 2 \sum_{\a\in\Phi^+} \sum_{m \geq 1} (\mu + m\a \mid \a)
\dim V(\l)_{\mu + m\a}
\\
\T
\quad = 2 \sum_{1\leq i\leq j\leq n} (\mu + \a_i + \cdots + \a_j \mid \a_i +
\cdots + \a_j)
\\
\T
\quad = 2 \sum_{1\leq i\leq j\leq n - 1}
\left( \frac{1}{2}(\e_1 + \cdots \e_n) + \e_i - \e_{j + 1} \mid \e_i -
\e_{j + 1}\right)
\\
\T
\qquad +\ 2 \sum_{1\leq i \leq n} \left( \frac{1}{2}(\e_1 + \cdots \e_n) +
\e_i \mid \e_i\right)
\\
\T
\quad = 2 . \frac{n(n-1)}{2} . 2 + 2.n.\frac{3}{2}
\\
\T
\quad = n(2n - 2 + 3) = n(2n + 1)
\end{array}
\]

In the calculation, we have used the fact that
$\l - \mu = \a_1 + \cdots + \a_n$, so that the only positive roots
$\a$ which can contribute are of the form $\a_i + \cdots + \a_j$, and
they do so only for the first multiple ($m = 1$).

Now 
\[
\begin{array}{l}
\left( (\l + \rho \mid \l + \rho) - (\mu + \rho \mid \mu + \rho) \right)
\\
\T
\quad = (\l + \mu + 2\rho \mid \l - \mu)
\\
\T
\quad = ((2n + 1)\e_1 + (2n - 2)\e_2 + (2n - 4)\e_3 + \cdots + 2\e_n \mid\e_1)
\\
\T
\quad = 2n + 1.
\end{array}
\]

Thus $\dim V(\l)_\mu = n$, and we are done.
\end{proof}

In the construction of the Weyl module $V(\l)$, we have chosen a
highest weight vector $v$. We will now give an explicit basis for
$V(\l)_\mu$, in terms of the Chevalley generators $f_i$, $1\leq i \leq n$.

\begin{proposition}
\label{prop:weyl basis}
A basis for $V(\l)_\mu$ is given by $(v_1,\ldots,v_n)$, where
\[
v_i = f_i f_{i+1} \ldots f_n f_{i-1} \ldots f_1  v
\]
\end{proposition}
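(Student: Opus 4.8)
The plan is to show that the $n$ vectors $v_i = f_i f_{i+1} \cdots f_n f_{i-1} \cdots f_1 v$ are nonzero and linearly independent; since we already know $\dim V(\l)_\mu = n$ from Lemma \ref{lemma:weyl char}, this suffices. First I would check that each $v_i$ indeed lies in the weight space $V(\l)_\mu$: applying the Chevalley generators $f_1,\dots,f_n$ once each to $v$ lowers the weight by $\a_1 + \cdots + \a_n = \l - \mu$, so the weight is exactly $\mu$ regardless of the order. The content of the claim is therefore the linear independence, together with the nonvanishing (which is a consequence of independence once we know the space is $n$-dimensional, but it is cleaner to track nonvanishing as we go).

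The main tool I would use is a weight-space decomposition of the intermediate vectors under the torus, combined with the fact that in the Weyl module $V(\l)$ (working over $\CM$, or over $\ZM$ via Kostant's form) all weights strictly between $\l$ and $\mu$ occur with multiplicity one, as established in the proof of Lemma \ref{lemma:weyl char}. Concretely, set $w_i = f_{i-1} f_{i-2} \cdots f_1 v$ for $1 \le i \le n+1$ (so $w_1 = v$), which has weight $\l - (\a_1 + \cdots + \a_{i-1})$; each such weight lies strictly between $\l$ and $\mu$ (or equals $\l$), hence by multiplicity-one each $w_i$ spans its weight space and in particular is nonzero — one must check $f_{i-1}\cdots f_1 v \ne 0$, which follows because $\l - (\a_1+\cdots+\a_j)$ is a weight of $V(\l)$ for each $j \le n$ (it is conjugate under a suitable Weyl group element, or below $\l$ and above $\mu$). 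Then $v_i = f_i f_{i+1} \cdots f_n \, w_i$. Now I would introduce the grading by how far along the chain $\a_n, \a_{n-1}, \dots$ one has descended, and argue that the vectors $f_n w_i$, for varying $i$, are independent because they sit in distinct weight spaces: $f_n w_i$ has weight $\l - (\a_1 + \cdots + \a_{i-1}) - \a_n$, and these are pairwise distinct for distinct $i$ (the coefficient of $\a_n$ is fixed at $1$, but the coefficients of $\a_1,\dots,\a_{i-1}$ differ). Iterating $f_{n-1}, f_{n-2}, \dots, f_i$ — and checking at each stage that the weight is still one of the multiplicity-one weights strictly between $\l$ and $\mu$, so that the relevant weight space is one-dimensional — shows that $v_i$ is nonzero and that the $v_i$ remain distinguishable. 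Since the $v_i$ all land in the single weight space $V(\l)_\mu$, a direct linear-independence argument is needed there: I would use the $\UC_\ZM^-$-module structure and the commutation relations $[f_j, f_k] = 0$ for $|j-k| \ge 2$ to rewrite, for each $i$, the monomial defining $v_i$ in a normal form, and then apply a suitable raising operator (a product of $e_k$'s) that sends $v_i$ back to a nonzero multiple of $v$ while killing $v_j$ for $j \ne i$; the existence of such an operator follows from the Serre relations and a careful bookkeeping of which $e_k$ can act nontrivially.

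The hard part, I expect, is precisely this last step: proving linear independence \emph{within} the single weight space $V(\l)_\mu$, since the weight-space separation argument only gets one down to the claim that the $n$ vectors are not obviously equal, not that they are independent. The cleanest route is probably to exhibit, for each $i$, an explicit element $\theta_i$ of the enveloping algebra (a monomial in the $e_k$) with $\theta_i v_j = \delta_{ij} \, c_i \, v$ for some nonzero scalar $c_i \in \ZM$; dual-basis style, this immediately forces independence. Computing $\theta_i$ amounts to "undoing" the string $f_i \cdots f_n f_{i-1} \cdots f_1$ and showing that applying it to $v_j$ with $j \ne i$ produces $0$ because at some stage an $e_k$ hits a vector of weight not admitting a raise by $\a_k$ (equivalently, $\langle \text{weight}, \a_k^\vee\rangle \le 0$ together with the relevant $\mathfrak{sl}_2$-string being trivial). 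This is a finite, type-$B_n$-specific verification; the $A_{n-2}$ and $A_{n-1}$ structure of the stabilizers $W_{I_\l}$ and $W_{I_\mu}$ from Lemma \ref{lemma:orbits} should make the combinatorics manageable. Everything here works over $\ZM$ with Kostant's $\ZM$-form, so the basis statement holds integrally, which is what is needed for the subsequent reduction mod $\ell$ and the eventual computation of $d_{\l\mu}$.
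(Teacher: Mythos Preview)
Your strategy is the dual of the paper's: you try to prove that the $v_i$ are linearly \emph{independent} and then invoke $\dim V(\l)_\mu = n$, whereas the paper proves that the $v_i$ \emph{span} $V(\l)_\mu$ and then invokes the same dimension count. The spanning argument is considerably shorter. One observes that $V(\l)_\mu$ is generated by vectors $f_{i_1}\cdots f_{i_n}v$ with $(i_1,\ldots,i_n)$ a permutation of $(1,\ldots,n)$; for such a vector to be nonzero one needs $i_n\in\{1,n\}$ (otherwise $\langle\l,\a_{i_n}^\vee\rangle=0$), and iterating this constraint together with the commutation relations $[f_j,f_k]=0$ for $|j-k|\ge 2$ forces every nonzero such monomial into the form $f_i f_{i+1}\cdots f_n f_{i-1}\cdots f_1 v$ for some $i$. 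Thus $(v_1,\ldots,v_n)$ spans an $n$-dimensional space, hence is a basis.

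Your independence route is viable but the specific mechanism you propose has a gap. You want monomials $\theta_i$ in the $e_k$ with $\theta_i v_j=\delta_{ij}c_i v$, but the natural candidate $\theta_i=e_1\cdots e_{i-1}e_n\cdots e_i$ does \emph{not} act diagonally: one computes $(v\mid \theta_i v_j)=(v_i\mid v_j)$, and this Gram matrix is the tridiagonal matrix with $2$'s (and a $3$) on the diagonal and $1$'s on the off-diagonals --- exactly the matrix of the contravariant form in the next proposition. No monomial reordering will make it diagonal. What \emph{does} work is to compute this Gram matrix and note that its determinant is $2n+1\neq 0$, hence the $v_i$ are independent over $\QM$; but that is precisely the content of the subsequent proposition, so your proof of the basis statement would end up presupposing the harder calculation that the basis statement is meant to set up. The paper's spanning argument avoids this circularity and gives the integral basis statement for free.
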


\begin{proof}
The weight space $V(\l)_\mu$ is certainly generated by elements of the
form $f_{i_1} \ldots f_{i_n} v$, where $(i_1,\ldots,i_n)$ is some
permutation of $(1,\ldots,n)$. Since there are no multiplicities, we
do not have to worry about divided powers.

Now, in order to get a non-zero vector, we have to take $i_n$ equal to
$1$ or $n$. If we choose $i_n = 1$, then we get
$f_1 v$, a vector of weight $\varpi_2 + \varpi_n$, and for the next
step we have to choose between $2$ and $n$ for $i_{n-1}$ to get a
non-zero result, and so on. Using the commutation relations, we can
assume that we first apply $f_1$, $f_2$, \ldots, $f_{i - 1}$, and then
$f_n$, $f_{n - 1}$, \ldots, $f_i$, for some $i$ between $1$ and $n$.

Thus $V(\l)_\mu$ is generated by $(v_1,\ldots,v_n)$. By Lemma
\ref{lemma:weyl char}, we have $\dim V(\l)_\mu = n$, so this is a basis.
By the way, we see that it is also a basis of $V_\ZM(\l)_\mu$.
\end{proof}

Alternatively, we can use the tableaux combinatorics, as explained in
\cite{KN}. We can see $V(\l) = V(\varpi_1 + \varpi_n)$ as the
submodule of $V(\varpi_1) \otimes V(\varpi_n)$ generated by a highest
weight vector. The Weyl module $V(\varpi_1)$ is the natural
representation of dimension $2n + 1$, and $V(\varpi_n)$ is the spin
representation, of dimension $2^n$. With the notation of
\cite[\S 5]{KN}, we have
\[
v_i =
{\tiny
\begin{array}{|c|c|}
\hline
1 & i + 1\\
\hline
2\\
\cline {1-1}
\vdots\\
\cline {1-1}
\T \widehat{i + 1}\\
\cline {1-1}
\vdots\\
\cline {1-1}
n\\
\cline {1-1}
\T\ov{i + 1}
\\
\cline {1-1}
\end{array}
}
\]
for $1 \leq i \leq n - 1$, and
\[
v_n =
{\tiny
\begin{array}{|c|c|}
\hline
1 & 0\\
\hline
2\\
\cline {1-1}
\vdots
\\
\cline {1-1}
n
\\
\cline {1-1}
\end{array}
}
\]

\begin{proposition}
The matrix of the contravariant form $(\cdot | \cdot)$ on
$V_\ZM(\l)_\mu$, in the basis $(v_1,\ldots,v_n)$, is given by:
\[
\begin{pmatrix}
2 & 1 & 0 & \cdots & 0\\
1 & \ddots & \ddots & \ddots & \vdots\\
0 & \ddots & 2 & 1 & 0\\
\vdots & \ddots & 1 & 2 & 1\\
0 & \cdots & 0 & 1 & 3
\end{pmatrix}
\]
The elementary divisors of this matrix are
$(1, 1, \ldots, 1, 2n + 1)$. 
\end{proposition}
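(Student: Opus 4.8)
The plan is to compute the Gram matrix $\bigl((v_i\mid v_j)\bigr)_{1\le i,j\le n}$ directly, using the explicit basis of Proposition \ref{prop:weyl basis}, and then read off the elementary divisors. First I would recall that the contravariant form on $V_\ZM(\l)$ satisfies $(xu\mid w)=(u\mid x^{\top}w)$, where $\top$ is the anti-involution of $\UC_\ZM$ sending $f_i\mapsto e_i$, $e_i\mapsto f_i$, $h\mapsto h$, and that it is normalized by $(v\mid v)=1$ on the highest weight vector. So $(v_i\mid v_j)$ is computed by moving, one Chevalley generator at a time, the operators defining $v_i$ across to act (as $e_i$'s) on the word defining $v_j$, using the relations $[e_i,f_j]=\delta_{ij}h_i$, $h_i v = \langle\l,\a_i^\vee\rangle v$, and $e_i v = 0$. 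Because the weight $\mu$ lies just one level (namely $\b=\a_1+\cdots+\a_n$) below $\l$, every intermediate weight space is at most one-dimensional and there are no divided powers to track; this keeps the bookkeeping finite and essentially combinatorial.

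The key computational steps are: (i) diagonal entries. For $i<n$ one finds $(v_i\mid v_i)=2$, and $(v_n\mid v_n)=3$; the extra $+1$ at the bottom-right comes from the fact that $\a_n=\e_n$ is the short root, so $\langle\l,\a_n^\vee\rangle=(\,\l\mid\a_n\,)\cdot 2/(\a_n\mid\a_n)$ picks up the factor $2$ rather than $1$ (equivalently, $f_n^2v\ne 0$ contributes). (ii) Super/sub-diagonal entries: $(v_i\mid v_{i+1})=1$ for $1\le i\le n-1$, because the words for $v_i$ and $v_{i+1}$ agree except for adjacent transpositions $f_if_{i+1}\leftrightarrow f_{i+1}f_i$, and commuting one $e$ past produces exactly one $h$-eigenvalue term equal to $1$. (iii) All other entries vanish: if $|i-j|\ge 2$ the generators involved commute past each other and annihilate $v$, giving $0$. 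This yields the stated tridiagonal matrix $M$.

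For the elementary divisors, I would not invert $M$ but instead argue structurally. The determinant is computed by the standard tridiagonal recursion: let $D_k$ be the determinant of the top-left $k\times k$ block built from the ``all-$2$'' part; then $D_k=(k+1)$, and expanding along the last row of $M$ gives $\det M = 3D_{n-1}-D_{n-2}=3n-(n-1)=2n+1$. Since $M$ is an integer matrix whose $(n-1)\times(n-1)$ top-left minor has determinant $n\ne 0$ with $\gcd$ of its entries $1$ (it contains a $1$), the first $n-1$ elementary divisors are all $1$ — more carefully, one checks the gcd of all $k\times k$ minors is $1$ for $k\le n-1$ by exhibiting a $1\times1$ minor equal to $1$ and noting each $d_k$ divides $d_{k+1}$. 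Hence the Smith normal form is $\diag(1,\dots,1,\det M)=\diag(1,\dots,1,2n+1)$. This also re-proves, via the theory of the contravariant form, that $V(\l)_\mu\ne L(\l)_\mu$ exactly when $\ell\mid 2n+1$, consistent with the decomposition number $d_{\l\mu}$.

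**Expected main obstacle.** The routine-but-delicate part is step (i)–(iii): carefully commuting the raising operators through the lowering-operator words defining the $v_j$ and verifying that no unexpected cross-terms survive (in particular that the ``widehat'' omission in the tableau description, i.e. the skipping of index $i+1$ in the second factor, produces no extra contribution). The short-root asymmetry at position $(n,n)$ is where a sign- or factor-of-two slip is most likely, so I would double-check that entry by an independent route — e.g. directly in the tensor model $V(\varpi_1)\otimes V(\varpi_n)$ of \cite{KN}, computing $(v_n\mid v_n)$ from the norms of the natural and spin representation basis vectors.
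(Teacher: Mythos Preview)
Your computation of the Gram matrix is exactly the paper's approach: both move the raising operators across the contravariant form and reduce $(v_i\mid v_j)$ to a product of values $\langle \nu,\a_k^\vee\rangle$ along the chain of intermediate weights. One small correction to your heuristic for the bottom-right entry: the $3$ does not come from $\langle\l,\a_n^\vee\rangle$ being $2$, nor from $f_n^2 v\neq 0$ --- in fact $\langle\l,\a_n^\vee\rangle=1$ and $f_n^2 v=0$. Rather, $(v_n\mid v_n)=\langle\l-\a_1-\cdots-\a_{n-1},\a_n^\vee\rangle\cdot\prod_{j<n}(\cdots)=\bigl(1-\langle\a_{n-1},\a_n^\vee\rangle\bigr)\cdot 1=1-(-2)=3$; it is the Cartan entry $\langle\a_{n-1},\a_n^\vee\rangle=-2$ that carries the short-root asymmetry.

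For the elementary divisors your determinant recursion is correct and gives $\det M=2n+1$, but the justification that $d_1=\cdots=d_{n-1}=1$ has a gap: exhibiting a $1\times1$ minor equal to $1$ together with $d_k\mid d_{k+1}$ only yields $d_1=1$; the divisibility goes the wrong way to propagate this upward, and $2n+1$ need not be prime. What you actually need is, for each $k\le n-1$, a $k\times k$ minor of absolute value $1$. This is immediate here: since the superdiagonal of $M$ consists entirely of $1$'s, the submatrix on rows $\{1,\dots,k\}$ and columns $\{2,\dots,k+1\}$ is lower unitriangular, hence has determinant $1$. With this one-line fix your argument is complete and is a bit cleaner than the paper's, which reaches the Smith form by an explicit sequence of row operations.
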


\begin{proof}
Using the commutation relations and the fact that $v$ is a highest
weight vector, we find:
\[
\begin{array}{rcl}
(v_i \mid v_i)
&=& (v \mid e_1 \ldots e_{i - 1} e_n \ldots e_i
            f_i \ldots f_n f_{i - 1} \ldots f_1 v)
\\
\\ &=&
\prod_{j = i}^n
\langle \l - \a_1 - \cdots \a_{i - 1} - \a_{j + 1} - \cdots - \a_n,
\a_j^\vee \rangle
\\
\\ && \times \quad
\prod_{j = 1}^{i - 1}
\langle \l - \a_1 - \cdots \a_{j - 1}, \a_j^\vee \rangle
\\
\\
&=&
\begin{cases}
2 \text{ if } 1\leq i \leq n - 1,\\
3 \text{ if } i = n.
\end{cases}
\end{array}
\]

Similarly, for $1 \leq i < j \leq n$, we have
\[
\begin{array}{rcl}
(v_i \mid v_j)
&=& (v \mid e_1 \ldots e_{i - 1} e_n \ldots e_i
            f_j \ldots f_n f_{j - 1} \ldots f_1 v)
\\
\\ &=&
\prod_{k = i + 1}^{j - 1}
\langle \l - \a_1 - \cdots \a_{i - 1},
\a_k^\vee \rangle
\\
\\ && \times \quad
\prod_{k = j}^n
\langle \l - \a_1 - \cdots \a_{i - 1} - \a_{k + 1} - \cdots - \a_n,
\a_k^\vee \rangle
\\
\\ && \times \quad
\prod_{k = 1}^i
\langle \l - \a_1 - \cdots \a_{k - 1}, \a_k^\vee \rangle
\\
\\
&=& 0^{j - i - 1} \times 1^{n + 1 - j} \times 1^i
\\
\\
&=&
\begin{cases}
1 \text{ if } j = i + 1,\\
0 \text{ otherwise.}
\end{cases}
\end{array}
\]

This determines the matrix of the contravariant form (which is
symmetric). Now, this matrix has the same elementary divisors as:
\[
\begin{pmatrix}
1&2&1&0&\cdots&0\\
0&1&2&\ddots&\ddots&\vdots\\
\vdots&\ddots&\ddots&\ddots&\ddots&0\\
\vdots&&\ddots&1&2&1\\
0&\cdots&\cdots&0&1&3\\
2&1&0&\cdots&0&0
\end{pmatrix}
\]
By induction, we can replace the last line by
$(0,\ldots,0,i+1,i,0,\ldots,0)$ without changing the elementary
divisors (where the first non-zero entry is in column $i$), for
$i$ up to $n - 1$. Then we can replace the last line by
$(0,\ldots,0,2n + 1)$, hence the result.
\end{proof}

\begin{theorem}
\label{th:ac_n}
Let $G = \Spin_{2n+1}$, the simply-connected simple group of type
$B_n$, and let $\l = \varpi_1 + \varpi_n$, $\mu = \varpi_n$ in the
numbering of \cite{BOUR456}.  Then we have $d_{\l\mu} = 1$ if $\ell$
divides $2n + 1$, and $0$ otherwise.
\end{theorem}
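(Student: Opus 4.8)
The plan is to determine $\dim_\FM L(\l)_\mu$, with $\FM = \ov\FM_\ell$, in two independent ways and to compare the outcomes. The point of contact between them is the contravariant form on $V(\l)$ computed in the proposition preceding the statement, whose reduction modulo $\ell$ controls the simple quotient.

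First I would observe, using Lemma~\ref{lemma:weyl char}, that every weight of $V(\l)$ lies in $W.\l\cup W.\mu$, and that the only dominant weights among these are $\l$ and $\mu$. Hence the only composition factors of the Weyl module $V(\l)$ over $\FM$ are $L(\l)$ and $L(\mu)$, with multiplicities $[V(\l):L(\l)]=1$ and $[V(\l):L(\mu)]=d_{\l\mu}$. Summing dimensions of $\mu$-weight spaces along a composition series, and using $\dim L(\mu)_\mu = 1$ (the highest weight line) together with $\dim V(\l)_\mu = n$ from Lemma~\ref{lemma:weyl char}, I get
\[
n = \dim_\FM L(\l)_\mu + d_{\l\mu},
\qquad\text{hence}\qquad
\dim_\FM L(\l)_\mu = n - d_{\l\mu}.
\]

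Second, I would use the standard description $L(\l) = V(\l)/\rad(\cdot\mid\cdot)$, where $(\cdot\mid\cdot)$ is the contravariant form. Since this form pairs each weight space with itself and annihilates pairs of distinct weight spaces, its radical is the direct sum over all weights of the radicals of the restricted forms; in particular $\dim_\FM L(\l)_\mu$ equals the rank over $\FM$ of the Gram matrix of $(\cdot\mid\cdot)$ on $V(\l)_\mu$ in the basis $(v_1,\dots,v_n)$. By Proposition~\ref{prop:weyl basis} this tuple is simultaneously a $\ZM$-basis of $V_\ZM(\l)_\mu$ and an $\FM$-basis of $V(\l)_\mu$, so the Gram matrix over $\FM$ is the reduction modulo $\ell$ of the integral Gram matrix of the preceding proposition, whose elementary divisors are $(1,\dots,1,2n+1)$. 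Its rank over $\FM$ is therefore $n$ if $\ell\nmid 2n+1$ and $n-1$ if $\ell\mid 2n+1$. Substituting into the identity $d_{\l\mu} = n - \dim_\FM L(\l)_\mu$ of the first step gives $d_{\l\mu}=0$ in the former case and $d_{\l\mu}=1$ in the latter, which is the assertion.

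This argument has no serious obstacle; the single step that requires genuine care is the identification, in the second part, of the Gram matrix over $\FM$ with the reduction modulo $\ell$ of the integral one. It rests on two standard compatibilities: that extension of scalars commutes with passage to the $\mu$-weight space — guaranteed here precisely because $(v_1,\dots,v_n)$ is a basis over both $\ZM$ and $\FM$ — and that the contravariant form is defined over $\ZM$ and compatible with base change. Everything else is bookkeeping with composition factors and the identity $L(\l)=V(\l)/\rad(\cdot\mid\cdot)$.
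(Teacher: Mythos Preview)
Your argument is correct and is precisely the intended one: the paper's proof reads in full ``This follows from the preceding results,'' and what you have written is exactly how those preceding results (the weight-space dimension from Lemma~\ref{lemma:weyl char}, the integral basis from Proposition~\ref{prop:weyl basis}, and the elementary divisors of the Gram matrix) assemble into the conclusion via $L(\l)=V(\l)/\rad(\cdot\mid\cdot)$ and the fact that $L(\l)$ and $L(\mu)$ are the only composition factors. Your care about identifying the Gram matrix over $\FM$ with the reduction of the integral one is well placed and uses exactly the observation, made in the proof of Proposition~\ref{prop:weyl basis}, that $(v_1,\dots,v_n)$ is a $\ZM$-basis of $V_\ZM(\l)_\mu$.
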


\begin{proof}
This follows from the preceding results.
\end{proof}

\section{Non-equivalences of singularities}
\label{sec:non-equiv}

\begin{theorem}
\begin{enumerate}
\item The singularities $a_n$ and $ac_n$ are not equivalent.

\item The singularities $a_2$, $ac_2$ and $ag_2$ are pairwise
  non-equivalent.

\item The singularities $c_2$ and $cg_2$ are not equivalent.
\end{enumerate}
\end{theorem}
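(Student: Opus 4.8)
The plan is to treat the decomposition numbers $d_{\lambda\mu}$ as an invariant of the smooth-equivalence class of the singularity, and to distinguish the singularities in each family by exhibiting one prime $\ell$ at which the invariants disagree. The starting point is the observation that if two (quasi-)minimal degenerations $\lambda\rtordu\mu$ and $\lambda'\rtordu\mu'$ have smoothly equivalent transverse slices, i.e.\ $\Sing(\ov\gr_\lambda,\gr_\mu)=\Sing(\ov\gr_{\lambda'},\gr_{\mu'})$, then $d_{\lambda\mu}=d_{\lambda'\mu'}$ for \emph{every} prime $\ell$: the multiplicity $[\FM\p\JC_{!*}(\lambda,\OM):\p\JC_{!*}(\mu,\FM)]$ only depends on the behaviour of these perverse sheaves in a neighbourhood of the smooth stratum $\gr_\mu$, hence on the transverse slice $S(\lambda,\mu)$ alone, and by the geometric Satake equivalence and its corollary it equals the ordinary decomposition number $d^G_{\lambda\mu}$ for the relevant reductive group. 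Contrapositively, to prove two singularities non-equivalent it suffices to realise each as a (quasi-)minimal degeneration in a suitable affine Grassmannian and to find a prime at which the two decomposition numbers differ.

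First I would assemble the relevant numbers. The minimal singularity $a_n$ is realised in type $A_n$ (take $G=\mathrm{SL}_{n+1}$, $\lambda$ the highest root, $\mu=0$); the theorem of Section~\ref{sec:min}, with $\Phi_{I_\courte}$ of type $A_n$ and $P/Q\cong\ZM/(n+1)$, gives $d^{a_n}\neq 0$ exactly when $\ell\mid n+1$. For $ac_n$, realised with $G=\Spin_{2n+1}$, Theorem~\ref{th:ac_n} gives $d^{ac_n}\neq 0$ exactly when $\ell\mid 2n+1$. The minimal singularity $c_2$ is realised with $\Phi_I$ of type $B_2$; the same theorem of Section~\ref{sec:min}, with $\Phi_{I_\courte}$ of type $A_1\times A_1$ and $P/Q$ a $2$-group, gives $d^{c_2}\neq 0$ exactly when $\ell=2$. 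For the quasi-minimal $G_2$-singularities I would pass through $G=G_2$: by Stembridge's classification $ag_2$ is the degeneration $\varpi_1+\varpi_2\rtordu 2\varpi_1$ and $cg_2$ is $\varpi_2\rtordu\varpi_1$, so $d^{ag_2}=[V_{G_2}(\varpi_1+\varpi_2):L_{G_2}(2\varpi_1)]$ and $d^{cg_2}=[V_{G_2}(\varpi_2):L_{G_2}(\varpi_1)]$. For $ag_2$ I only need the \emph{vanishing} of this multiplicity at $\ell=3$ and $\ell=5$, which follows from the strong linkage principle: here $\lambda-\mu$ is the short root $\gamma=\alpha_1+\alpha_2$ with $\langle\lambda+\rho,\gamma^\vee\rangle=8$, so a single affine reflection links $\mu$ to $\lambda$ only when $\ell=7$, and a short residue check modulo $\ell\,\ZM\Phi$ shows $\mu$ and $\lambda$ are unlinked for $\ell=2,3,5$. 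For $cg_2$, $[V_{G_2}(\varpi_2):L_{G_2}(\varpi_1)]$ equals $1$ when $\ell=3$ and $0$ otherwise — the classical statement that the $14$-dimensional adjoint Weyl module of $G_2$ acquires the $7$-dimensional composition factor $L(\varpi_1)$ precisely in characteristic $3$.

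The conclusions are then immediate. Since $n\ge 2$ and $\gcd(n+1,2n+1)=1$, any prime $\ell_0\mid n+1$ satisfies $\ell_0\nmid 2n+1$, so $d^{a_n}=1\neq 0=d^{ac_n}$ at $\ell=\ell_0$, giving (1). For (2): at $\ell=3$ one has $d^{a_2}=1$ while $d^{ac_2}=d^{ag_2}=0$, and at $\ell=5$ one has $d^{ac_2}=1$ while $d^{a_2}=d^{ag_2}=0$; hence $a_2$, $ac_2$, $ag_2$ are pairwise non-equivalent. For (3): at $\ell=3$ one has $d^{cg_2}=1$ while $d^{c_2}=0$, so $c_2$ and $cg_2$ are non-equivalent.

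The main obstacle is simply having reliable decomposition-number data; everything else reduces to strong linkage and the coprimality of $n+1$ and $2n+1$. Theorem~\ref{th:ac_n} and Section~\ref{sec:min} supply what is needed for $ac_n$, $a_n$ and $c_2$; for the two $G_2$ degenerations only a little classical modular representation theory is involved, and the genuinely delicate point — the non-vanishing $[V_{G_2}(\varpi_1+\varpi_2):L_{G_2}(2\varpi_1)]\neq 0$ at $\ell=7$ — is \emph{not} needed here (it will be needed only for the non-smoothness statement of Section~\ref{sec:non-smooth}).
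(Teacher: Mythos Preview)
Your proof is correct and follows the same strategy as the paper: use the geometric Satake equivalence to identify the decomposition number attached to each singularity with a multiplicity $[V(\lambda):L(\mu)]$, then separate the singularities by exhibiting a prime at which these multiplicities differ. Your handling of $ag_2$ via strong linkage (needing only the \emph{vanishing} at $\ell=3,5$ rather than the non-vanishing at $\ell=7$) is precisely the shortcut the paper records in Remark~\ref{rem:linkage}; the paper itself simply cites L\"ubeck's tables for the full values. One harmless slip: for the $c_2$ singularity, $\Phi_I$ is of type $B_2$ with a \emph{single} short simple root, so $\Phi_{I_\courte}$ is of type $A_1$, not $A_1\times A_1$; since $P/Q$ is a $2$-group either way, your conclusion $d^{c_2}\neq 0\iff \ell=2$ is unaffected.
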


\begin{proof}
For each pair of singularities, we proceed as follows: if the two
singularities were equivalent, then
they would have the same intersection cohomology stalks, both in
characteristic zero and in characteristic $\ell$, and thus the
corresponding decomposition numbers for perverse sheaves should be the
same; but these are also decomposition numbers for a reductive group, and
in each case, we see that the decomposition numbers differ for some
primes $\ell$.

The decomposition number for the singularity $a_n$ is $1$ if $\ell$
divides $n + 1$, and $0$ otherwise, whereas the decomposition number
for the singularity $ac_n$ is $1$ if $\ell$ divides $2n + 1$, and $0$
otherwise. Moreover $n + 1$ and $2n + 1$ are coprime.
Hence these two singularities are not equivalent.

The decomposition number for $a_2$ (resp. $ac_2$, $ag_2$) is $1$ for
$\ell = 3$ (resp. $\ell = 5$, $7$), and $0$ otherwise
(for the $ac_2$ and $ag_2$ cases, one can for example consult the tables in
\cite{LUB}). Hence these singularities are pairwise non-equivalent.

The decomposition number for $c_2$ and $cg_2$ is $1$ if $\ell = 2$ 
(resp. $\ell = 3$) and $0$ otherwise (again, one can use the tables in
\cite{LUB} for $cg_2$). Hence these two singularities are not equivalent.
\end{proof}

\begin{remark}
\label{rem:linkage}
We do not need the full strength of Theorem \ref{th:ac_n}, if we just
want to prove the non-equivalence of singularities for the $ac_n$ case. If
$\l \rtordu \mu$ is a minimal degeneration, by the strong linkage
principle,  if $d_{\l\mu} \neq 0$ then we must have
\[
\mu = s_{\b, m\ell} (\l)
 = \l -(\langle \l + \rho, \b^\vee \rangle - m\ell)\b
\]
where $\b = \l - \mu$, hence $\ell$ must divide
$\langle \l + \rho, \b^\vee \rangle - 1$.
In the case of the singularity $ac_n$, $\b$ is the short dominant
root, and we have
\[
\begin{array}{l}
\langle \l + \rho, \b^\vee \rangle - 1
\\
\quad = \langle 2\varpi_1 + \varpi_2 + \cdots + \varpi_{n - 1} + 2\varpi_n,
2 \a_1^\vee + 2\a_2^\vee + \cdots + 2 \a_{n - 1}^\vee + \a_n^\vee
\rangle - 1
\\
\quad = 4 + 2(n - 2) + 2 - 1 = 2n + 1.
\end{array}
\]

Besides, $n + 1$ and $2n + 1$ are coprime. Hence, choosing a prime number
$\ell$ dividing $n + 1$, the decomposition number is one for
$a_n$, and zero for $ac_n$, so the two singularities cannot be
equivalent.

In the rank two case, choosing $\ell = 7$, the
decomposition number is $1$ for $ag_2$ and $0$ for $ac_2$ (in this
case $2n + 1 = 5$), which proves that they are also non-equivalent.
\end{remark}

\section{Non-smoothness}
\label{sec:non-smooth}

We will now give a new proof of the fact that the smooth locus of the
closure of an orbit in the affine Grassmannian is reduced to the orbit itself.

\begin{proof}[Representation-theoretic proof of Theorem \ref{th:smooth locus}]
As in the proof of Malkin-Ostrik-Vybornov, we need only check that
each minimal degeneration $\l \rtordu \mu$ is non-smooth. To this end,
we will prove that there is some prime number $\ell$ for which it is
not $\FM_\ell$-smooth. For this, it is enough to prove that there is
some prime number $\ell$ for which the decomposition number
$d_{\l\mu}$ for the corresponding perverse sheaves is non-trivial. By
the geometric Satake isomorphism, this is a decomposition number for
the reductive group $G$.

In case \ref{case:simple}, we have $d_{\l\mu} = 1$ if $\ell$ divides
$n + 1$ and $0$ otherwise, thus the $A_n$ singularity is
not $\FM_\ell$-smooth for $\ell$ dividing $n + 1$.

In case \ref{case:short}, we have 
$d_{\l\mu} = \dim_{\FM_\ell} \FM_\ell \otimes_\ZM
P(\Phi_{I_\courte})/Q(\Phi_{I_\courte})$, thus
this minimal singularity is not $\FM_\ell$-smooth for
$\ell$ dividing $|P(\Phi_{I_\courte})/Q(\Phi_{I_\courte})|$.

In case \ref{case:ac}, we have $d_{\l\mu} = 1$ if $\ell$ divides
$2n + 1$ and $0$ otherwise, thus the $ac_n$ singularity is
not $\FM_\ell$-smooth for $\ell$ dividing $2n + 1$.

In case \ref{case:ag}, we have $d_{\l\mu} = 1$ if $\ell = 7$ 
and $0$ otherwise, thus the $ag_2$ singularity is
not $\FM_7$-smooth.

In case \ref{case:cg}, we have $d_{\l\mu} = 1$ if $\ell = 3$ 
and $0$ otherwise, thus the $cg_2$ singularity is
not $\FM_3$-smooth.

So, in all cases, there is at least one prime number $\ell$ for which
the decomposition number is non-trivial, so all the minimal
degenerations are non-smooth, and the result follows.
\end{proof}

\def\cprime{$'$}

\end{document}